\newcommand{\cC}{\mathcal{C}}
\newcommand{\cN}{\mathcal{N}}
\newcommand{\cP}{\mathcal{P}}
\newcommand{\cR}{\mathcal{R}}
\newcommand{\RR}{\mathbb{R}}
\newcommand*{\kl}[3][]{%
\ifthenelse{\isempty{#1}}{\operatorname{D}(#2\,\|\,#3)}%
{\operatorname{D}(#2\,\|\,#3\mid#1)}%
}
\newcommand*{\triplenorm}[1]{{\left\vert\kern-0.25ex\left\vert\kern-0.25ex\left\vert #1
    \right\vert\kern-0.25ex\right\vert\kern-0.25ex\right\vert}}
\newcommand{\E}{\mathbb{E}}
\DeclareMathOperator{\Tr}{Tr}
\newcommand*{\ep}{\varepsilon}
\newcommand*{\rd}{\mathrm{d}}
\newcommand*{\dd}{\, \rd}
\DeclareMathOperator{\Ent}{Ent}
\newcommand{\KL}[2]{D_\text{KL}(#1 \| #2)}
\newcommand{\R}{\mathbb{R}}
\newcommand{\Rd}{\mathbb{R}^d}
\newcommand{\eps}{\varepsilon}
\newcommand{\pran}[1]{\left(#1\right)}
\newcommand{\brac}[1]{\left[#1\right]}
\newcommand{\sse}{\subseteq}
\renewcommand{\phi}{\varphi}
\newtheorem{theorem}{Theorem}
\newtheorem{corollary}{Corollary}
\newtheorem{proposition}{Proposition}
\newtheorem{lemma}{Lemma}
\DeclareMathOperator{\OT}{\text{OT}_0}
\DeclareMathOperator{\OTep}{\text{OT}_\eps}
\definecolor{blue_plots}{HTML}{377EB8}
\definecolor{orange_plots}{HTML}{FF7F00}
\begin{document}

\begin{center} {\LARGE{{Debiaser Beware: Pitfalls of Centering Regularized Transport Maps}}}

{\large{

\vspace*{.3in}

\begin{tabular}{cccc}
Aram-Alexandre Pooladian$^*$, Marco Cuturi$^\circ$, Jonathan Niles-Weed$^{*\dagger}$,\\
\end{tabular}
 
{
\vspace*{.1in}
\begin{tabular}{c}
				$^*$Center for Data Science, New York University\\
				$^\dagger$Courant Institute of Mathematical Sciences, New York University \\
 				\texttt{ap6599@nyu.edu, jnw@cims.nyu.edu}\\
 				$^\circ$ Google Research, currently at Apple.\\
 				\texttt{cuturi@apple.com}\\
\end{tabular} 
}

}}
\vspace*{.1in}

\today

\end{center}

\vspace*{.1in}

\begin{abstract}
Estimating optimal transport (OT) maps (\emph{a.k.a.} Monge maps) between two measures $P$ and $Q$ is a problem fraught with computational and statistical challenges. A promising approach lies in using the dual potential functions obtained when solving an entropy-regularized OT problem between samples $P_n$ and $Q_n$, which can be used to recover an approximately optimal map. The negentropy penalization in that scheme introduces, however, an estimation bias that grows with the regularization strength. A well-known remedy to debias such estimates, which has gained wide popularity among practitioners of regularized OT, is to center them, by subtracting auxiliary  problems involving $P_n$ and itself, as well as $Q_n$ and itself. We do prove that, under favorable conditions on $P$ and $Q$, debiasing can yield better approximations to the Monge map.
However, and perhaps surprisingly, we present a few cases in which debiasing is provably detrimental in a statistical sense, notably when the regularization strength is large or the number of samples is small. These claims are validated experimentally on synthetic and real datasets, and should reopen the debate on whether debiasing is needed when using entropic optimal transport.
\end{abstract}

\section{Introduction}\label{sec:intro}
Estimating an optimal transport (OT) map from a source measure $P$ to a target measure $Q$ is an increasingly central issue in machine learning: such a map, when successfully learned from data, would in principle allow the generation of new samples from $Q$ by pushing particles sampled from $P$.
For example, if we are given full access to the source measure (typically a Gaussian) but only samples from the target, this problem can be viewed with the lens of normalizing flows~\citep{grathwohl2018ffjord, huang2021convex,finlay2020learning} with tight connections with the problem of estimating generative models~\citep{WassersteinGAN,2017-Genevay-AutoDiff,salimans2018improving}.
When both $P$ and $Q$ can only be accessed through i.i.d samples $X_1,\ldots,X_n \sim P$ and $Y_1,\ldots,Y_n \sim Q$, estimating such maps is even more challenging, yet increasingly relevant when, for instance, trying to infer cellular evolution from population measurements~\citep{schiebinger2019optimal,moriel2021novosparc,Demetci2021.SCOTv2,dai2018autoencoder}.

Formally, we call $T_0 : \Rd \to \Rd$ an OT map, or a Monge map, when it is the minimizer to the~\citeauthor{Monge1781} problem \citeyearpar{Monge1781}:
\begin{align}\label{eq: monge_p}
\min_{T \in \mathcal{T}(P,Q)} \int \frac{1}{2}\| x - T(x)\|^2_2 \dd P(x),
\end{align}
where $P$ and $Q$ are two probability measures on $\Omega \sse \R^d$, and $\mathcal{T}(P,Q)$ is the set of \textit{pushforward} maps from $P$ to $Q$, namely  $\mathcal{T}(P,Q) := \{T : \Omega \to \Omega \ | \ T_\sharp P := P \circ T^{-1} = Q\}$. The existence of such a minimizer is guaranteed under mild regularity conditions on the two measures \citep{santambrogio2015optimal}. The task of map estimation is to provide an estimator $\hat{T}_n$ on the basis of samples $P_n$ from $P$ and $Q_n$ from $Q$ whose expected $L^2(P)$ distance to $T_0$ is small.

When $T_0 \in \cC^\alpha$ (the space of $\lfloor \alpha \rfloor$-times differentiable functions whose $\lfloor \alpha \rfloor$th derivative is $\alpha - \lfloor \alpha \rfloor$ H{\"o}lder smooth) and $P$ and $Q$ satisfy additional technical assumptions, \citet{hutter2021minimax} provided the first estimator that achieved the following estimation rate, 
\begin{align}\label{hr_bound}
\E \|\hat{T}_n - T_0\|^2_{L^2(P)} \lesssim n^{-\frac{2\alpha}{2\alpha - 2 + d}}\log^3(n) \,,
\end{align}
which they showed to be minimax optimal up to logarithmic factors. Though statistically optimal, their estimator requires gridding the space of observations, with computational complexity scaling exponentially in $d$. This work left open the question of finding a computationally tractable estimator that has good statistical properties. While \citet{manole2021plugin} and \citet{deb2021rates} proposed minimax optimal estimators, the fastest estimator between these two works has a runtime of $\Tilde{O}(n^3)$. Similarly, \citet{muzellec2021near} devised a nearly minimax map estimator in the high smoothness regime based on kernel sum-of-squares, however the underlying constant depends exponentially in $d$ as their regularization parameter goes to zero.

Leveraging the computational benefits of the Sinkhorn algorithm, \citet{pooladian2021entropic} analyzed the computational and statistical properties of the \textit{Entropic map}, which to any input $x$ associates the conditional expectation of $Y$ given $X=x$ under the entropy-regularized optimal coupling. This estimator can be computed in $\Tilde{O}(n^2)$ time and is near optimal when the regularity of $T_0^{-1}$ is low.
Together, these results indicate that entropic regularization is a promising methodology for the estimation of Monge maps.

However, entropic regularization is known to introduce some form of bias when solving OT problems, notably when the regularization parameter is large \citep{cuturi2018semidual,schmitzer2019stabilized}.
A popular remedy for this phenomenon has been to \textit{debias} said estimates by introducing auxiliary OT problems \citep{GenPeyCut18,feydy2019interpolating,chizat2020faster}. 
This debiasing procedure has been shown to offer both theoretical and practical benefits for estimating transport distances, suggesting that debiasing should be applied systematically when considering regularized OT problems, notably with the entropic estimation of Monge maps.
Surprisingly, we show that the situation is more nuanced: we prove that debiasing can offer benefits when the regularization parameter is small and the number of samples is large but can substantially \emph{degrade} statistical performance when these conditions are not met.

\textbf{Main Contributions:} In this paper, we explore whether debiasing is a fruitful approach for estimating Monge maps. Specifically, our contributions are the following:
\begin{enumerate}
    \item We define the debiased map estimator, and prove a rate of convergence to the Monge map in the small regularization limit (§~\ref{sec:results}, \cref{thm: eps_d_small});
    \item We prove quantitative differences between the Entropic map and its debiased variant in the large regularization regime, and present examples where debiasing provably degrades performance (§~\ref{sec:results}, \Cref{thm: err_diff_main} and \Cref{thm: counter_ex});
    \item We derive closed-form expressions in the Gaussian-to-Gaussian case, and show that debiasing yields a better approximation to the Monge map when the regularization parameter is small (§~\ref{sec:gauss});
    \item We illustrate advantages and pitfalls of debiasing on several numerical experiments (§~\ref{sec:exp}). Notably, we show that finite-sample effects can substantially worsen the performance of the debiased map.
\end{enumerate}
We use the symbol $C$ to denote a positive constant whose value may change from line to line, and write $a\lesssim b$ if there exists a constant $C > 0$ such that $a \leq Cb$. 
\section{Background on Optimal transport}\label{sec:background}

\subsection{Optimal transport without regularization}\label{sec: background_ot}
For $\Omega \sse \Rd$ compact, we define $\cP(\Omega)$ to be the space of (Borel) probability measures with support contained in $\Omega$, and~$\cP_{ac}(\Omega)$ be those with densities. We first recall \citeauthor{Bre91}'s fundamental result on the existence of an optimal map.
\begin{theorem}[\citeauthor{Bre91}'s Theorem \citeyearpar{Bre91}]\label{thm: brenier_thm}
Let $P \in \cP_{ac}(\Omega)$ and $Q \in \cP(\Omega)$. Then there exists a solution $T_0$ to~\Cref{eq: monge_p}, with $T_0 = \text{Id} - \nabla f_0$, where $f_0$ is a 1-strongly concave function solving
\begin{align}\label{eq: ot_dual}
\sup_{(f,g)\in\Phi}\int f \dd P + \int g \dd Q \,,
\end{align}
where $ \Phi = \{(f, g) \in L^1(P) \times L^1(Q):  f(x) + g(y) \leq \frac{1}{2}\|x - y\|_2^2 \, \, (x,y \in \Omega)\}$.
\end{theorem}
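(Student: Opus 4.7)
The statement is the classical Brenier theorem, so the plan is to follow the standard Kantorovich-relaxation-plus-convex-analysis route. First I would relax the Monge problem~\eqref{eq: monge_p} to the Kantorovich problem
\begin{equation*}
\inf_{\pi\in\Pi(P,Q)} \int \tfrac{1}{2}\|x-y\|_2^2 \dd \pi(x,y),
\end{equation*}
where $\Pi(P,Q)$ is the set of couplings with marginals $P$ and $Q$. Existence of a minimizer $\pi^\star$ is a soft argument: $\Pi(P,Q)$ is tight (hence weakly compact since $\Omega$ is compact) and the cost is continuous and bounded, so Prokhorov plus direct method yields $\pi^\star$. By Kantorovich duality, $\pi^\star$ is paired with a maximizer $(f_0,g_0)$ of the dual problem~\eqref{eq: ot_dual}, and one can restrict to $c$-conjugate pairs without loss of generality.

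Next I would make the substitution that converts quadratic OT into a Legendre-transform problem: define $\phi_0(x)\defeq \tfrac{1}{2}\|x\|_2^2 - f_0(x)$ and $\psi_0(y)\defeq \tfrac{1}{2}\|y\|_2^2 - g_0(y)$. The dual constraint $f_0(x)+g_0(y)\leq \tfrac{1}{2}\|x-y\|_2^2$ rewrites as $\phi_0(x)+\psi_0(y)\geq \langle x,y\rangle$, and $c$-conjugacy for the quadratic cost is equivalent to $\phi_0$ and $\psi_0$ being Legendre conjugates of each other. In particular one can take $\phi_0$ to be convex and lower semicontinuous. Writing $f_0(x)=\tfrac{1}{2}\|x\|_2^2-\phi_0(x)$, the function $f_0$ is $1$-strongly concave exactly because $\phi_0$ is convex.

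Now I would extract the map. The support of any optimal $\pi^\star$ is $c$-cyclically monotone, which by Rockafellar's theorem (adapted to the quadratic cost) is contained in the graph of the subdifferential of some convex function; choosing that function to be our $\phi_0$, we get $\supp(\pi^\star)\subseteq \{(x,y):y\in \partial \phi_0(x)\}$. Because $P$ is absolutely continuous and $\phi_0$ is convex, Alexandrov/Rademacher differentiation shows that $\phi_0$ is differentiable $P$-a.e., so $\partial \phi_0(x)=\{\nabla\phi_0(x)\}$ for $P$-almost every $x$. Hence the optimal coupling is concentrated on the graph of $T_0(x)\defeq \nabla \phi_0(x)=x-\nabla f_0(x)$, which forces $(T_0)_\sharp P=Q$ and $T_0\in \mathcal{T}(P,Q)$. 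This $T_0$ then attains the Monge cost, proving existence.

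The delicate step, and the one I would want to be most careful about, is the passage from the cyclical monotonicity of $\supp(\pi^\star)$ to a map: one needs to invoke Rockafellar to produce the convex potential, and then absolute continuity of $P$ is essential to turn the set-valued subdifferential into a single-valued gradient $P$-a.e. Every other step is either compactness, an algebraic reformulation via Legendre transform, or a direct conjugacy check, but this single-valuedness argument is what really uses the hypothesis $P\in\cP_{ac}(\Omega)$ and is where Brenier's theorem genuinely does work beyond generic Kantorovich duality.
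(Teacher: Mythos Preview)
Your outline is a correct and standard proof of Brenier's theorem via Kantorovich duality, $c$-cyclical monotonicity, Rockafellar's theorem, and $P$-a.e.\ differentiability of convex functions. However, the paper does not actually prove this statement: it is quoted as a classical result attributed to \citeauthor{Bre91}~\citeyearpar{Bre91} in the background section, with no proof given. There is therefore nothing in the paper to compare your argument against; your proposal simply supplies what the paper takes for granted by citation.
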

In other words, Brenier's theorem asserts that when $P$ has a density function, the optimal map is the gradient of a convex function, $x\mapsto \tfrac{1}{2}\|x\|^2-f_0(x)$. We call the maximizers $(f_0,g_0)$ to \Cref{eq: ot_dual} \textit{optimal (Kantorovich) potentials}.

If $P$ does not have a density, an optimal transport map may not exist between $P$ and $Q$. To remedy this, \citet{Kan42} devised a convex relaxation of the optimal transport problem where one is optimizing over \textit{plans} instead of maps
\begin{align}\label{eq: kant_p}
\OT(P,Q) := \inf_{\pi \in \Pi(P,Q)} \iint \frac{1}{2}\| x - y\|_2^2 \dd \pi(x,y),
\end{align}
where $ \Pi(P,Q)$ is the set of joint probability measures with marginals $P$ and $Q$, called the set of \textit{couplings}.
When an optimal map exists, it also gives rise to a solution to \cref{eq: kant_p}; however, 
unlike the Monge problem, \cref{eq: kant_p} always admits a minimizer when $P$ and $Q$ have finite second moments. We call such a minimizer an \textit{optimal plan}, denoted $\pi_0$.
Note that $\OT$ is also the definition of $\frac{1}{2}W_2^2(P,Q)$, the squared 2-Wasserstein distance between $P$ and $Q$.

\subsection{Entropic optimal transport}
We define \textit{Entropic optimal transport} to be the objective function that arises when we add the \textit{Kullback--Liebler (KL) divergence}, denoted $D_\text{KL}(\cdot\|\cdot)$, as a regularizer to \cref{eq: kant_p}. Recall that $\KL \mu \nu = \int \log( \frac {\dd \mu} {\dd \nu} )\dd \mu$. Thus, for $P,Q \in \cP_{\text{ac}}(\Omega)$, we write the $\eps$-regularized OT problem as
\begin{align}\label{eq: eot_p}
    \OTep(P,Q) &= \inf_{\pi \in \Gamma(P,Q)} \iint c \dd \pi + \eps \KL{\pi}{P\otimes Q}\\
    &= \inf_{\pi \in \Gamma(P,Q)} \iint c \dd \pi + \eps\iint \log \pi \dd \pi -\eps (\Ent(P) +\Ent(Q))\,,\nonumber
\end{align}
where we abbreviate $c(x, y) = \frac 12 \|x - y\|_2^2$, and the \textit{entropy} of an absolutely continuous probability measure is denoted by $\text{Ent}(P) = \int p(x) \log(p(x)) \dd x$; note that the first line also holds in the absence of densities. Since $\OTep$ is now strongly convex, we call its unique minimizer the \textit{optimal entropic plan}, denoted $\pi_\eps$, which exists under a finite-second moment condition~\citep{PeyCut19}.

Similar in spirit to \cref{eq: ot_dual}, the Entropic OT problem admits the following dual formulation under mild conditions,
\begin{multline}\label{eq: eot_d1}
    \OTep(P,Q) = \!\!\!\!\!\!\!\sup_{f \in L^1(P), g\in L^1(Q)} \int f \dd P + \int g \dd Q 
    -\eps \iint e^{(f(x)+g(y)-c(x,y))/\eps}\dd P(x)\dd Q(y) + \eps \,,
\end{multline}
where the maximizers, denoted $(f_\eps,g_\eps)$ are called \textit{optimal entropic potentials}. Primal-dual relationships provide that the optimal entropic coupling is closely related to $(f_\eps,g_\eps)$ through the following representation \citep{Csi75}:
\begin{align*}
    \dd \pi_\eps(x,y) = \exp\pran{\frac{f_\eps(x) + g_\eps(y) - c(x,y)}{\eps}}\dd P(x) \dd Q(y)\,.
\end{align*}
As a result, and since $\pi_\eps$ integrates to $1$, it holds for the maximizers that \Cref{eq: eot_d1} simplifies to the expression (see also other facts in \Cref{sec: main_proofs_ot}):
\begin{equation*}
    \OTep(P,Q) = \int f_\eps \dd P + \int g_\eps \dd Q\,.
\end{equation*}

\subsection{The Sinkhorn divergence}\label{subsec:sinkhorndiv}
While $\OTep(P,Q)$ is a reasonable approximation of $W_2^2(P,Q)$, we have that $\OTep(P,P) \neq 0$ but $W_2^2(P,P) = 0$, which leads to an inherent bias. In view of this phenomenon, several works have suggested \textit{centering} the Entropic OT objective in the following manner 
\begin{align}\label{eq: sinkdiv}
S_\eps(P,Q):=\OTep(P,Q)-\tfrac12\OTep(P,P)-\tfrac12\OTep(Q,Q),
\end{align}
resulting in the so-called \textit{Sinkhorn divergence}. Centering the Entropic OT objective is akin to \textit{debiasing} the objective, as now $S_\eps(P,Q) = 0 \iff P=Q $ \citep{feydy2019interpolating}. 

The corrective terms, $\OTep(P,P)$ and $\OTep(Q,Q)$, each admit their own dual formulation that is similar to \Cref{eq: eot_d1}, with the key difference that they each possess only one optimal potential \citep{feydy2019interpolating}. For example, we denote by $\alpha_\eps$ the optimal entropic \textit{self-potential}:
\begin{align}
\OTep(P,P) &= \sup_{\alpha \in L^1(P)} \int 2\alpha\dd P \quad - \eps \iint \pran{e^{(\alpha(x)+\alpha(y)-c(x,y))/\eps}  - 1}\dd P(x)\dd P(y) = 2\int \alpha_\eps \dd P\,, \nonumber
\end{align}
and similarly by $\beta_\eps$ the optimal self-potential corresponding to $\OTep(Q,Q)$. Thus, at optimality, one has the relationship
\begin{align}
    S_\eps(P,Q) &= \int (f_\eps - \alpha_\ep) \dd P + \int (g_\eps - \beta_\eps)\dd Q \\
    &=: \int f_\eps^D \dd P + \int g_\eps^D \dd Q \,,
\end{align}
where we define $(f_\eps^D,g_\eps^D)$ to be the \textit{debiased optimal entropic potentials}, or \textit{Sinkhorn potentials}.

The Sinkhorn divergence has found many applications in the machine learning community as a way of eliminating the ``bias" in the entropic OT objective \citep{GenPeyCut18,feydy2019interpolating,chizat2020faster}. For example, in the case of estimating the Wasserstein distance, this debiasing effect can be made rigorous: \citet{pal2019difference} showed that $\OTep$ gives an additive $O(\eps\log(\eps^{-1}))$ approximation to $W_2^2$. By contrast, \citet{chizat2020faster} showed that $S_\eps = W_2^2 + O(\eps^2)$.

\subsection{The Entropic map}\label{subsec:entropicmap}
\citet{pooladian2021entropic} studied the \textit{Entropic map}, defined to be the \textit{barycentric projection} of the optimal entropic plan, defined as
\begin{equation}\label{eq: t_eps_expectation}
    T_\eps(x) := \E_{\pi_\eps}[Y|X=x],
\end{equation}
as an estimator for $T_0$ between two measures $P$ and $Q$. This object is partially motivated by the following observation, which acts as an entropic analogue to Brenier's Theorem \citep[see][Prop. 2]{pooladian2021entropic}:
\begin{align}
    T_\eps = \text{Id} - \nabla f_\eps\,.
\end{align}
Moreover, in the finite-sample regime, this estimator can be computed in $\Tilde{O}(n^2)$ operations and can be efficiently parallelized on GPUs, which is in contrast to other estimators in the literature that require either at least $\Tilde{O}(n^3)$ complexity and are not easily parallelizeable \citep{manole2021plugin,deb2021rates,hutter2021minimax}.

\section{Entropic map vs. Sinkhorn map}\label{sec:results}

In this work, we introduce the \textit{Sinkhorn map}
\begin{equation}\label{eq: sinkhorn_map}
    T_\eps^D(x) := x - \nabla f_\eps^D(x)
    = T_\eps(x) + \nabla \alpha_\eps(x)\,
\end{equation}
which can be viewed as the natural, and perhaps desirable, extension of the Entropic map from §~\ref{subsec:entropicmap} following the centering or \textit{debiasing} principle advocated in §~\ref{subsec:sinkhorndiv}.




\subsection{Biased and Debiased MSE}
As discussed in §~\ref{subsec:sinkhorndiv}, debiasing yields crucial improvements in approximating $W_2^2(P,Q)$. Our primary focus is to determine whether or not such an improvement can be made in the context of map estimation. Assuming we have full access to the source and target measures $P$ and $Q$, we are interested in comparing the limiting behaviors of their Mean-Squared Error (MSE), $\cR(T_\eps)$ and $\cR(T_\eps^D)$, where for a given $T_0$ and any estimator $S:\mathbb{R}^d\rightarrow\mathbb{R}^d$,
\begin{equation}
\cR(S) := \|S - T_0\|^2_{L^2(P)}\,.
\end{equation}

\paragraph{Small \bm{$\eps$} limit}
As a by-product of their analysis, \citet{pooladian2021entropic} proved the following estimate:
\begin{proposition}\label{prop: pooladian_main}
Let $P,Q \in \cP_{\text{ac}}(\Omega)$ with bounded densities with compact support, with the density of $Q$ bounded from below. Let $T_0 := \nabla \phi_0$ be the optimal map between $P$ and $Q$, where $\mu I\preceq \nabla^2 \phi_0 \preceq LI$, and $T_0^{-1} \in \cC^{\alpha}$ for $\alpha>1$. Then for $\eps$ sufficiently small,
\begin{align*}
    \cR(T_\eps) \lesssim \eps^2I_0(P,Q) + \eps^{(\Bar{\alpha}+1)/2}\,,
\end{align*}
where $\bar{\alpha} = 3\wedge\alpha$ and $I_0(P,Q)$ is the integrated Fisher information along the $W_2^2(P,Q)$ geodesic.
\end{proposition}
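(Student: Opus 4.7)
The plan is to reduce $\cR(T_\eps)$ to a dual-suboptimality bound via the entropic Brenier identity $T_\eps = \Id - \nabla f_\eps$ together with the classical identity $T_0 = \Id - \nabla f_0$, which gives
\[
\cR(T_\eps) \;=\; \|\nabla f_\eps - \nabla f_0\|_{L^2(P)}^2.
\]
The task thus becomes a quantitative $H^1$-stability estimate between the Brenier potential and the entropic potential. I would then pass to the entropic semi-dual $F_\eps(f) := \int f\,\dd P + \int f^{c,\eps}\,\dd Q + \eps$, where $f^{c,\eps}(y) = -\eps \log \int e^{(f(x)-c(x,y))/\eps}\,\dd P(x)$ is the soft $c$-transform, whose unique maximizer is $f_\eps$ and whose value there equals $\OTep(P,Q)$. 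The second variation of $F_\eps$ at $f_0$ in a direction $h$ picks out a weighted Dirichlet norm $\int \|\nabla h\|^2 \rho\,\dd P$ (with $\rho$ bounded below thanks to boundedness of the densities and $\nabla^2 \phi_0 \succeq \mu I$), which yields a Bregman-type inequality
\[
\cR(T_\eps) \;\lesssim\; F_\eps(f_\eps) - F_\eps(f_0) \;=\; \OTep(P,Q) - F_\eps(f_0),
\]
so it suffices to upper-bound this dual gap.

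Next I would estimate the gap by a Laplace-type expansion of the soft $c$-transform evaluated at $f_0$. For each $y \in \Omega$, the integrand in $f_0^{c,\eps}(y)$ is concentrated near the unique maximizer $x^\ast(y) = T_0^{-1}(y)$ (uniqueness follows from $1$-strong concavity of $f_0$), so I would expand to order $\bar\alpha = 3 \wedge \alpha$ using that $T_0^{-1} \in \cC^\alpha$ controls the relevant derivatives of $\phi_0$. The leading quadratic term of the expansion, once integrated against $Q$, should assemble into $\tfrac{\eps^2}{8}\,I_0(P,Q)$, mirroring the sharp entropic-OT asymptotics of \citet{chizat2020faster} and \citet{pal2019difference}: this reflects the fact that $f_0$ is an $O(\eps)$-optimal test function for the $\eps$-regularized dual, and the residual second-order contribution is precisely the Fisher information along the displacement interpolation. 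The Taylor remainder of order $\bar\alpha$ — after a Gaussian-weight computation against the leading-order localizing measure of width $\sqrt\eps$ — contributes the second term $\eps^{(\bar\alpha+1)/2}$ of the bound.

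The main obstacle is carrying out this Laplace expansion rigorously in the low-regularity regime $\alpha \in (1,3)$, where a classical $\cC^3$ stationary-phase argument is unavailable. I would address this by mollifying $f_0$ at scale $\sqrt\eps$ and using the mollified potential $\tilde f_0$ as the test function in the semi-dual, choosing the mollification so that the direct perturbation error $\|\nabla\tilde f_0 - \nabla f_0\|^2_{L^2(P)}$ and the smoothed Laplace remainder are both of order $\eps^{(\bar\alpha+1)/2}$. The delicate point is that this mollification must not perturb the second-order coefficient — namely the integrated Fisher information term — by more than $\eps^{(\bar\alpha+1)/2}$, so the exponent $(\bar\alpha+1)/2$ appears as the balance point between these two competing scales. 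The remaining ingredients (strong concavity of $F_\eps$ with uniform constants, first-order optimality of $f_\eps$, and elementary estimates on compact $\Omega$ with bounded densities) are routine once the expansion and mollification are in place.
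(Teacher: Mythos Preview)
The paper does not prove this proposition; it is quoted as a result of \citet{pooladian2021entropic}, so there is no in-paper argument to compare your proposal against directly. The closest proxy is the proof of \Cref{lem: alpha_eps_bound}, which the authors explicitly say is modeled on the same source. That argument does \emph{not} proceed via strong concavity of the entropic semi-dual. Instead it works in the modified dual of \Cref{prop: otpp_newdual} over bivariate test functions $\eta(x,y)$, inserts the explicit choice $\eta(x,y)=\eps\bigl(h(x)^\top(y-x)-\tfrac{\eps}{2}\|h(x)\|^2-\log\Lambda_\eps-\log p(y)\bigr)$, and uses the Gaussian moment-generating-function identity so that the entropic penalty term vanishes \emph{exactly}; optimizing over $h$ then isolates the squared gradient difference directly. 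The upper bound on the dual value is supplied by the dynamical formulation \eqref{eq: dyn_otep} with the Wasserstein geodesic as test path, which is where $I_0(P,Q)$ enters. The $\eps^{(\bar\alpha+1)/2}$ remainder in the cited result comes from a separate regularity estimate, not from a Laplace expansion of $f_0^{c,\eps}$.

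Your route is a genuine alternative, and the accounting is plausible: one can verify (already in the Gaussian case) that the $O(\eps)$ contributions in $\OTep(P,Q)$ and in the Laplace expansion of $F_\eps(f_0)$ cancel, leaving a gap $\tfrac{\eps^2}{8}I_0+\text{remainder}$. The soft spot is the strong-concavity step. The second variation of $F_\eps$ is $D^2F_\eps(f)[h,h]=-\tfrac{1}{\eps}\int\mathrm{Var}_{\mu_y^f}[h]\,\dd Q(y)$, a conditional variance rather than a Dirichlet form; your assertion that it controls $\int\|\nabla h\|^2\rho\,\dd P$ with $\rho$ bounded below \emph{uniformly in $\eps$} is itself a Laplace-type approximation that must hold for every $f$ on the segment joining $f_0$ to $f_\eps$. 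Your mollification device targets only the expansion of $f_0^{c,\eps}$ and does not obviously cover this, so the Bregman inequality you write down is currently unjustified. The test-function approach visible in the paper's \Cref{lem: alpha_eps_bound} avoids this issue by trading the asymptotic strong-concavity inequality for an exact algebraic identity.
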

We notice the following phenomena: in the high smoothness regime, we obtain a quadratic rate of convergence. On the other hand, as $\alpha\to 1$, the convergence rate is linear in the regularization parameter.

Our first contribution, which is potentially of independent interest, is a gradient estimate of the self-potential $\alpha_\eps$: we show that the effect of adding $\nabla \alpha_\eps$ to the Entropic map as defined in \cref{eq: sinkhorn_map} is small. Proofs of the results found in this section are deferred to \Cref{sec: main_proofs}.
\begin{lemma}\label{lem: alpha_eps_bound}
Let $\alpha_\eps$ be the solution to $\OTep(P,P)$ with $P$ having a density function with compact support. Then
\begin{align}
\|\nabla \alpha_\eps\|^2_{L^2(P)} \leq \frac{\eps^2}{4} I_0(P,P)\,,
\end{align}
where $I_0(P,P) := \int \|\nabla \log p(x)\|^2 p(x) \dd x$ is the Fisher information of $P$.
\end{lemma}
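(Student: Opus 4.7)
The plan is to derive an exact operator identity linking $\nabla\alpha_\eps$ to the score $\nabla\log p$ of $P$, and then exploit spectral properties of a naturally arising conditional expectation operator to conclude. The main mechanism is that the Schr\"odinger factorization of the self-coupling forces this operator to be self-adjoint and positive-semidefinite with norm at most $1$, which delivers a universal factor of $1/2$ and hence the $1/4$ in the stated bound.

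First, I would combine the first-order optimality condition for the symmetric dual with the marginal-normalization identity $e^{-\alpha_\eps(x)/\eps} = \int e^{(\alpha_\eps(y) - c(x,y))/\eps}\,dP(y)$ to obtain the representation
\[
\nabla\alpha_\eps(x) = \E_{\pi_\eps}[X - Y \mid X = x].
\]
Writing $x-y = \eps\, e^{c(x,y)/\eps}\,\nabla_y e^{-c(x,y)/\eps}$ and integrating by parts in $y$ against the density $e^{\alpha_\eps(y)/\eps}p(y)$, boundary terms vanish under the compact support hypothesis on $p$ and produce
\[
\nabla\alpha_\eps(x) = -\E_{\pi_\eps}[\nabla\alpha_\eps(Y) \mid X=x] - \eps\,\E_{\pi_\eps}[\nabla\log p(Y)\mid X=x].
\]
Introducing the conditional expectation operator $T:L^2(P;\R^d)\to L^2(P;\R^d)$ defined by $(Tg)(x):=\E_{\pi_\eps}[g(Y)\mid X=x]$, this rewrites as the operator equation $(I+T)\nabla\alpha_\eps = -\eps\,T(\nabla\log p)$.

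Next I would analyze $T$. By the symmetry of the self-coupling $\pi_\eps$, $T$ is self-adjoint on $L^2(P;\R^d)$; by Jensen's inequality applied componentwise, $\op{T}\le 1$; and $T$ is positive semidefinite because, using the Schr\"odinger factorization $d\pi_\eps(x,y)\propto e^{\alpha_\eps(x)/\eps}e^{\alpha_\eps(y)/\eps}e^{-c(x,y)/\eps}dP(x)dP(y)$ and setting $h := g\,e^{\alpha_\eps/\eps}$,
\[
\langle g, Tg\rangle_{L^2(P)} = \iint h(x)\cdot h(y)\,e^{-\|x-y\|^2/(2\eps)}\,dP(x)\,dP(y) \ge 0,
\]
since the Gaussian kernel is positive definite by Bochner's theorem. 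Thus $\mathrm{spec}(T)\subseteq[0,1]$, and by functional calculus $\op{(I+T)^{-1}T} = \sup_{\lambda\in[0,1]}\lambda/(1+\lambda) = 1/2$. Inverting the operator equation gives $\|\nabla\alpha_\eps\|_{L^2(P)} \le (\eps/2)\|\nabla\log p\|_{L^2(P)}$, and squaring yields the claimed Fisher information bound.

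The main obstacle is justifying the integration by parts: under only a compact-support assumption, $p$ may fail to be smooth at $\partial\mathrm{supp}(P)$, so the argument either needs $p\in C^1$ vanishing on the boundary, or a mollification-and-limit argument. All other ingredients -- the Schr\"odinger factorization, the self-adjointness and positivity of $T$, and the spectral bound -- are clean. As a consistency check, for $P=\mathcal{N}(0,1)$ an explicit computation gives $\nabla\alpha_\eps(x)\approx(\eps/2)x$ to leading order, so $\|\nabla\alpha_\eps\|_{L^2(P)}^2 \approx \eps^2/4 = (\eps^2/4)\,I_0(P,P)$, confirming that the constant $1/4$ is sharp.
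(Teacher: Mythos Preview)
Your argument is correct and takes a genuinely different route from the paper's. The paper works through duality: it upper-bounds $\OTep(P,P)+\eps\log\Lambda_\eps+\eps\Ent(P)$ by $\tfrac{\eps^2}{8}I_0(P,P)$ via the Benamou--Brenier--Schr\"odinger dynamical formulation (plugging in the static flow $v\equiv 0$, $\rho\equiv p$), and lower-bounds $\OTep(P,P)$ by inserting the test function $\eta(x,y)=\eps\bigl(h(x)^\top(y-x)-\tfrac{\eps}{2}\|h(x)\|^2-\log\Lambda_\eps-\log p(y)\bigr)$ into a modified dual; a Gaussian moment-generating-function identity eliminates one term, and the choice $h=-\eps^{-1}\nabla\alpha_\eps$ then isolates $\tfrac{1}{2\eps}\|\nabla\alpha_\eps\|_{L^2(P)}^2$ on the left. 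Your approach instead derives the exact operator identity $(I+T)\nabla\alpha_\eps=-\eps\,T(\nabla\log p)$ by integration by parts and concludes from the spectral bound $\op{(I+T)^{-1}T}\le\tfrac12$, which follows because the Schr\"odinger factorization makes the self-adjoint contraction $T$ positive semidefinite. Your route is more structural and self-contained---it avoids the dynamical formulation entirely and the operator equation carries extra information (e.g.\ it explains why the Gaussian case is near-sharp). The paper's route, by contrast, avoids integration by parts and hence sidesteps the boundary-regularity issue you correctly flag: it goes through under just compact support and $I_0(P,P)<\infty$, without requiring $p$ to vanish smoothly on $\partial\,\supp(P)$ or a mollification argument.
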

Combining \cref{prop: pooladian_main} with \cref{lem: alpha_eps_bound}, we obtain the following result, which shows that in the $\eps \to 0$ regime,  we are no worse when we debias our map estimator. 
\begin{theorem}\label{thm: eps_d_small}
Under the assumptions of \Cref{prop: pooladian_main}, for $\eps$ small enough, there exists $C_1,C_2 > 0$ such that
$$\cR(T^D_\eps) \leq \cR(T_\eps) + C_1\eps^2 + C_2 \eps^{(\bar{\alpha}+5)/4}\,,$$
where $C_1$ and $C_2$ are constants depending on $I_0(P,P)$ and $I_0(P,Q)$.
In particular, if $I_0(P, P)$ and $I_0(P, Q)$ are finite, then as $\eps \to 0$,
\begin{equation*}
    T_\eps \to T_0, \quad T_\eps^D \to T_0 \quad \text{in $L^2(P)$.}
\end{equation*}
\end{theorem}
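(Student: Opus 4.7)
The plan is to directly expand $\cR(T_\eps^D)$ using the identity $T_\eps^D = T_\eps + \nabla \alpha_\eps$ from \Cref{eq: sinkhorn_map}, and to bound the resulting cross term and quadratic term by combining the Fisher information estimate of \Cref{lem: alpha_eps_bound} with the $\cR(T_\eps)$ bound of \Cref{prop: pooladian_main}.

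First I would write
\begin{equation*}
\cR(T_\eps^D) \;=\; \|(T_\eps - T_0) + \nabla \alpha_\eps\|_{L^2(P)}^2 \;=\; \cR(T_\eps) \;+\; 2\langle T_\eps - T_0, \nabla \alpha_\eps\rangle_{L^2(P)} \;+\; \|\nabla \alpha_\eps\|_{L^2(P)}^2.
\end{equation*}
The third term is immediately controlled by \Cref{lem: alpha_eps_bound}, yielding $\|\nabla \alpha_\eps\|_{L^2(P)}^2 \leq \tfrac{\eps^2}{4} I_0(P,P)$, which supplies the $C_1 \eps^2$ contribution.

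Next I would bound the cross term by Cauchy--Schwarz:
\begin{equation*}
2\langle T_\eps - T_0, \nabla \alpha_\eps\rangle_{L^2(P)} \;\leq\; 2\sqrt{\cR(T_\eps)}\cdot \|\nabla \alpha_\eps\|_{L^2(P)}.
\end{equation*}
From \Cref{prop: pooladian_main}, $\sqrt{\cR(T_\eps)} \lesssim \eps\sqrt{I_0(P,Q)} + \eps^{(\bar\alpha+1)/4}$, and from \Cref{lem: alpha_eps_bound}, $\|\nabla \alpha_\eps\|_{L^2(P)} \lesssim \eps\sqrt{I_0(P,P)}$. Multiplying these two estimates produces contributions of order $\eps^{2}$ and $\eps^{(\bar\alpha+5)/4}$; absorbing the $\eps^2$ piece into the constant $C_1$ and naming the remaining coefficient $C_2$ yields the claimed inequality
\begin{equation*}
\cR(T^D_\eps) \leq \cR(T_\eps) + C_1\eps^2 + C_2 \eps^{(\bar{\alpha}+5)/4}.
\end{equation*}

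For the convergence statement, note that since $\bar\alpha > 1$ we have $(\bar\alpha+5)/4 > 1$, so both $\eps^2$ and $\eps^{(\bar\alpha+5)/4}$ tend to $0$ as $\eps \to 0$. Combined with $\cR(T_\eps) \to 0$ (which follows directly from \Cref{prop: pooladian_main} since both exponents $2$ and $(\bar\alpha+1)/2$ are positive), this gives $\cR(T_\eps^D) \to 0$, i.e.\ $T_\eps^D \to T_0$ in $L^2(P)$, while the convergence $T_\eps \to T_0$ is already contained in \Cref{prop: pooladian_main}.

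No step seems genuinely hard here once \Cref{lem: alpha_eps_bound} is in hand; the only delicate point is verifying that the $\eps$ regime for which both \Cref{prop: pooladian_main} and \Cref{lem: alpha_eps_bound} apply is the same, and that the assumption $\bar\alpha > 1$ is enough to ensure the additional error terms are indeed vanishing (and in particular of strictly higher order than the leading $\cR(T_\eps) \sim \eps^{(\bar\alpha+1)/2}$ rate when $\bar\alpha \leq 3$, so that debiasing is never asymptotically worse by more than a constant).
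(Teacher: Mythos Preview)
Your proposal is correct and follows essentially the same route as the paper: expand the square using $T_\eps^D = T_\eps + \nabla\alpha_\eps$, bound the quadratic term by \Cref{lem: alpha_eps_bound}, and control the cross term via Cauchy--Schwarz combined with \Cref{prop: pooladian_main}. The only cosmetic difference is that the paper first consolidates to $\cR(T_\eps) + \tfrac{3\eps}{2}I_0(P,P)\sqrt{\cR(T_\eps)}$ before plugging in \Cref{prop: pooladian_main}, whereas you substitute the bound on $\sqrt{\cR(T_\eps)}$ directly; both lead to the same $C_1\eps^2 + C_2\eps^{(\bar\alpha+5)/4}$ conclusion.
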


\paragraph{Large \bm{$\eps$} limit}
We now turn our attention to the large $\eps$ regime. \Cref{prop: map_conv} characterizes the limiting behavior of the maps, whereas \Cref{thm: err_diff_main} concerns their MSE.
\begin{proposition}\label{prop: map_conv}
Let $P, Q \in \cP(\Omega)$, and let $\mu_P, \mu_Q \in \RR^d$ denote their means.
As $\eps \to \infty$, and in $L^2(P)$ sense, one has
\begin{equation}
T_\eps \  {\rightarrow} \ \mu_Q ,\quad T_\eps^D \   {\rightarrow} \ \text{Id} + (\mu_Q - \mu_P)\,.
\end{equation}
\end{proposition}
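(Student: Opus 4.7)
My plan is to deduce both limits from a single quantitative statement: as $\eps\to\infty$, the entropic plan $\pi_\eps$ concentrates on the product $P\otimes Q$, and this forces the conditional distributions that define the barycentric map $T_\eps(x)=\E_{\pi_\eps}[Y\mid X=x]$ to converge to $Q$ in an $L^2(P)$-averaged total-variation sense. Applied to the self-coupling of $\OTep(P,P)$, the same fact will control $\nabla\alpha_\eps$ and yield the debiased limit.

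For the main step, testing $P\otimes Q$ as a feasible point in the entropic primal gives
\begin{equation*}
\int c\dd\pi_\eps + \eps\KL{\pi_\eps}{P\otimes Q} \;\leq\; \int c\dd(P\otimes Q) \;=:\; M,
\end{equation*}
which is finite since $\Omega$ is compact. Hence $\KL{\pi_\eps}{P\otimes Q}\leq M/\eps$. Because the $X$-marginal of $\pi_\eps$ is exactly $P$, the KL chain rule yields $\KL{\pi_\eps}{P\otimes Q}=\int \KL{\pi_\eps(\cdot\mid x)}{Q}\dd P(x)$, and Pinsker's inequality therefore gives $\int \tv{\pi_\eps(\cdot\mid x)}{Q}^2 \dd P(x)\lesssim 1/\eps$. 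Letting $R=\mathrm{diam}(\Omega)<\infty$ and fixing any $y_0\in\Omega$, I would write $T_\eps(x)-\mu_Q=\int (y-y_0)\dd(\pi_\eps(\cdot\mid x)-Q)(y)$ to obtain the pointwise bound
\begin{equation*}
\|T_\eps(x)-\mu_Q\| \leq 2R\,\tv{\pi_\eps(\cdot\mid x)}{Q},
\end{equation*}
and integrating against $P$ would conclude $\|T_\eps-\mu_Q\|_{L^2(P)}^2\lesssim 1/\eps\to 0$.

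The same argument applied verbatim to $\OTep(P,P)$, whose entropic barycentric map equals $x\mapsto x-\nabla\alpha_\eps(x)$ by the entropic Brenier identity recalled in \Cref{subsec:entropicmap}, yields $x-\nabla\alpha_\eps(x)\to \mu_P$ in $L^2(P)$, so $\nabla\alpha_\eps\to \Id-\mu_P$. Substituting into the identity $T_\eps^D = T_\eps+\nabla\alpha_\eps$ recorded in \cref{eq: sinkhorn_map} produces the claimed limit $T_\eps^D\to\Id+(\mu_Q-\mu_P)$. I do not anticipate a substantive obstacle: compactness of $\Omega$ both keeps $M$ finite and converts total-variation control of the conditionals into control of their means, and no regularity of $P$ or $Q$ (nor existence of a Monge map) is required, consistent with the bare hypotheses of the proposition.
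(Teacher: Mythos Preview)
Your proof is correct and shares the same overall architecture as the paper's: show $\KL{\pi_\eps}{P\otimes Q}\to 0$ by testing the product coupling (the paper isolates this as \Cref{lem: kl_limit}, while you record the sharper quantitative bound $M/\eps$), disintegrate via the KL chain rule, convert conditional KL into control of the conditional means, and then repeat verbatim for the self-coupling to handle $\nabla\alpha_\eps$. The one substantive difference is the inequality used for that conversion. The paper invokes the $T_1$ transport inequality (available because compactly supported measures are sub-Gaussian) to get $\KL{\pi_\eps^x}{Q}\gtrsim W_1^2(\pi_\eps^x,Q)\geq\|\E_{\pi_\eps}[Y\mid X=x]-\mu_Q\|^2$; you instead use Pinsker's inequality followed by the elementary bound $\|\text{mean difference}\|\leq 2R\cdot\tv{\pi_\eps(\cdot\mid x)}{Q}$. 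Your route is more elementary (Pinsker requires no sub-Gaussianity), with compactness entering only at the last step to bound the integrand, and it delivers the same $O(1/\eps)$ rate. Either argument suffices under the stated hypotheses.
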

\begin{figure}[h!]
    \centering
    \includegraphics[width=0.55\textwidth]{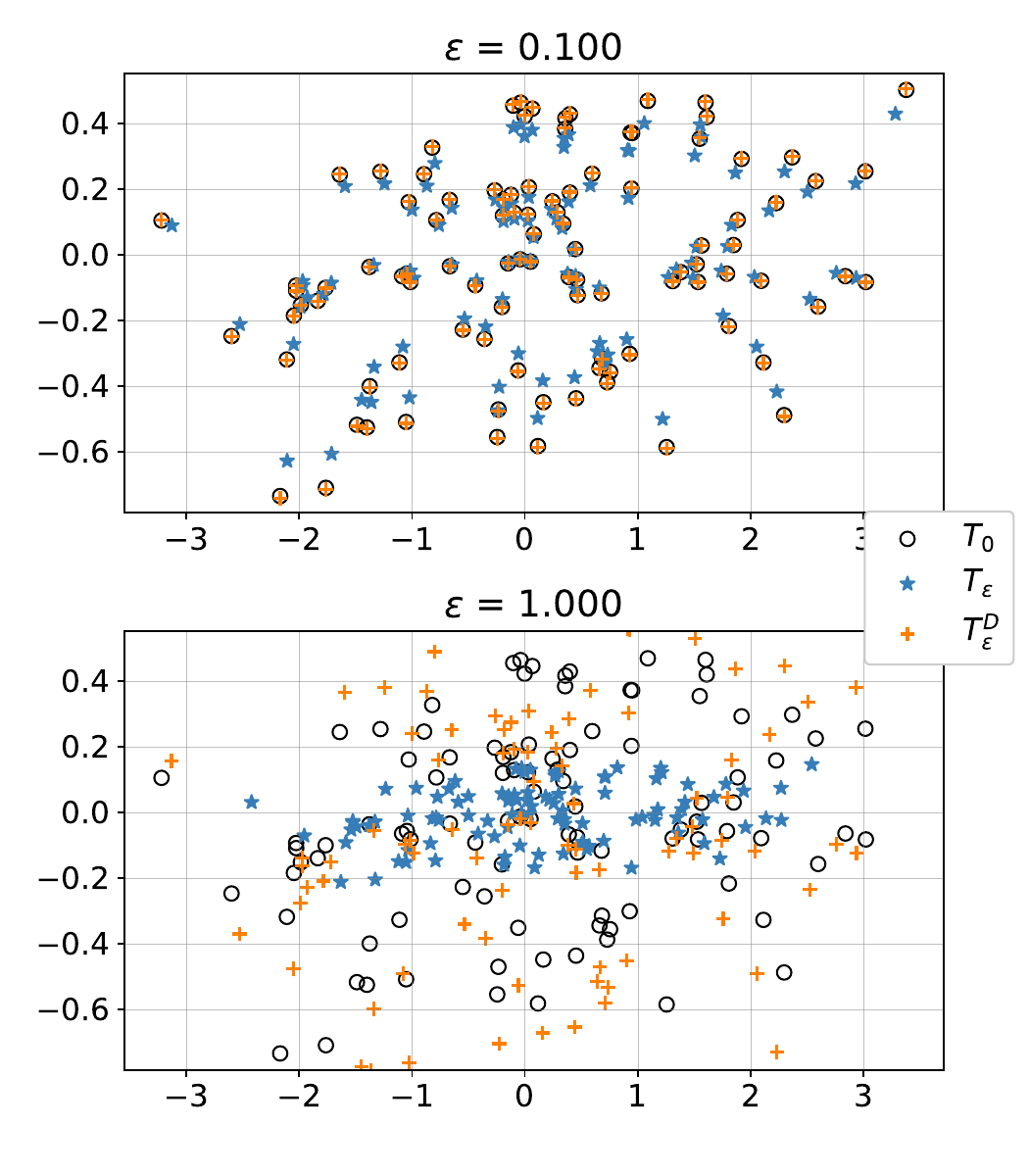}
    \caption{Visual behavior of the Entropic ${T}_\eps$ and Sinkhorn (debiased) ${T}_\eps^D$ maps between two Gaussians using closed-form expressions derived in \S~\ref{sec:gauss}, evaluated on arbitrary samples from the input Gaussian. The estimators approach $T_0$ as $\varepsilon\approx0$. When $\eps$ is relatively large, we see that ${T}_\eps$ is biased towards the mean of the target, which in this case is zero. In contrast, ${T}_\eps^D$ is still maintaining the overall shape of the target.}
    \label{fig: viz_elliptic}
\end{figure}

\Cref{prop: map_conv} shows that as $\eps \to \infty$, the Entropic map approaches the constant map $x \mapsto \mu_Q$, whereas the Sinkhorn map approaches the linear map $x \mapsto x + (\mu_Q - \mu_P)$. This contrasts with the $\ep \to 0$ limit, where both maps tend to $T_0$; these two behaviors are illustrated in \Cref{fig: viz_elliptic}.

\Cref{prop: map_conv} implies the following expressions for the limiting values of the MSE.
\begin{theorem}\label{thm: err_diff_main}
For $P,Q \in \cP(\Omega)$, as $\eps \to \infty$, $$\cR(T_\eps) \to \mathrm{Var}(Q) , \quad \cR(T^D_\eps) \to W^2_2(\bar{P},\bar{Q}),$$ where $\bar{P} := P - \mu_P$ and $\bar{Q} := Q - \mu_Q.$
\end{theorem}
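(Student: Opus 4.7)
My plan is to read off the two limits directly from \Cref{prop: map_conv} and then identify the resulting $L^2(P)$ quantities. The key observation is that the functional $S \mapsto \|S - T_0\|_{L^2(P)}$ is $1$-Lipschitz on $L^2(P)$, so the convergence $T_\eps \to \mu_Q$ (respectively $T_\eps^D \to \Id + (\mu_Q - \mu_P)$) in $L^2(P)$ immediately yields $\cR(T_\eps) \to \|\mu_Q - T_0\|^2_{L^2(P)}$ (respectively $\cR(T_\eps^D) \to \|\Id + (\mu_Q - \mu_P) - T_0\|^2_{L^2(P)}$). Everything then reduces to evaluating these two limiting squared norms in closed form.

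For the biased limit, since $T_0$ pushes $P$ forward to $Q$, a change of variables gives
\begin{equation*}
\|\mu_Q - T_0\|^2_{L^2(P)} = \int \|T_0(x) - \mu_Q\|^2 \dd P(x) = \int \|y - \mu_Q\|^2 \dd Q(y) = \mathrm{Var}(Q),
\end{equation*}
which delivers the first claim directly.

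For the debiased limit, I will expand the square:
\begin{align*}
\|\Id + (\mu_Q - \mu_P) - T_0\|^2_{L^2(P)} &= \int \|x - T_0(x)\|^2 \dd P(x) \\
&\quad + 2(\mu_Q - \mu_P)^\top \!\! \int (x - T_0(x)) \dd P(x) + \|\mu_Q - \mu_P\|^2.
\end{align*}
Using $\int x \dd P = \mu_P$ together with $\int T_0 \dd P = \mu_Q$ (again from $T_0{}_\sharp P = Q$), the linear term collapses to $-2\|\mu_Q - \mu_P\|^2$, so the expression simplifies to $W_2^2(P, Q) - \|\mu_Q - \mu_P\|^2$. I then invoke the centering identity $W_2^2(P, Q) = W_2^2(\bar P, \bar Q) + \|\mu_P - \mu_Q\|^2$, itself obtained by writing $X - Y = (X - \mu_P) - (Y - \mu_Q) + (\mu_P - \mu_Q)$ under an arbitrary coupling and noting that the cross terms vanish after taking expectations. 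This recovers $W_2^2(\bar P, \bar Q)$ and concludes the proof.

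Given \Cref{prop: map_conv}, no step is substantively difficult; the only point worth care is the algebraic cancellation in the debiased expansion, which is precisely what produces the clean limit $W_2^2(\bar P, \bar Q)$ rather than a more complicated expression involving $\mu_P, \mu_Q$.
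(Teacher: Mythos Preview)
Your proposal is correct and follows essentially the same route as the paper: both arguments invoke \Cref{prop: map_conv}, use the reverse triangle inequality (which you phrase as $1$-Lipschitzness of $S \mapsto \|S - T_0\|_{L^2(P)}$) to pass the limit through the norm, and then identify the resulting $L^2(P)$ quantities via the pushforward $T_0{}_\sharp P = Q$. The paper dispatches the debiased limit with ``the second follows by a similar argument''; your expansion of the square together with the centering identity $W_2^2(P,Q) = W_2^2(\bar P,\bar Q) + \|\mu_P - \mu_Q\|^2$ is a clean way to fill in those details, and is equivalent to recognizing directly that $(x-\mu_P,\,T_0(x)-\mu_Q)$ is an optimal coupling of $(\bar P,\bar Q)$.
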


Crucially, neither $\mathrm{Var}(Q)$ nor $W^2_2(\bar{P},\bar{Q})$ dominates the other in general, and whether $\cR(T_\eps)$ or $\cR(T^D_\eps)$ is larger depends on the properties of $P$ and $Q$.
\Cref{thm: err_diff_main} exposes one potential pitfall of debiasing: it can either help or harm the estimate of the Monge map, depending on the size of $\eps$ and the properties of $P$ and $Q$.

\Cref{fig:closedform_behavior} illustrates this effect in the Gaussian-to-Gaussian setting. 
When $\eps$ is small, debiasing yields better estimates, but when $\eps$ is large, the approximation error of the maps converge to the proven quantities in \Cref{thm: err_diff_main}, and debiasing does not necessarily dominate the biased estimator.

\paragraph{Debiasing can be harmful even for small \bm{$\eps$}}
\Cref{thm: err_diff_main} shows that debiasing is not uniformly helpful when $\eps$ is large.
Nevertheless, one may conjecture that this phenomenon only manifests when $\eps \to \infty$, and that there exists $\eps_0$ such that debiasing is benign for $\eps \leq \eps_0$.
Our next result shows that this conjecture is emphatically false: it is possible for debiasing to yield \emph{arbitrarily worse} estimates of the Monge map, no matter how small $\eps$ is taken.
\begin{theorem}\label{thm: counter_ex}
For any $\eps < 1$ and any $M > 0$, there exist a pair of densities $(P_\eps, Q_\eps)$ for which
$\cR(T^D_\eps) \geq M \cR(T_\eps)$.
\end{theorem}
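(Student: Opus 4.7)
The idea is to exhibit scale invariance of the ratio $\cR(T_\eps^D)/\cR(T_\eps)$ and use it to reduce the problem, for any fixed $\eps<1$, to the large-regularization regime already governed by \Cref{thm: err_diff_main}. Fix a reference pair of compactly supported densities $(P_0,Q_0)$ on $\RR^d$ and, for $a>0$, let $P^{(a)}:=(S_a)_\# P_0$ and $Q^{(a)}:=(S_a)_\# Q_0$ with $S_a(x):=ax$. The densities in the theorem will be $(P_\eps,Q_\eps):=(P^{(a)},Q^{(a)})$ for a suitably small $a=a(\eps,M)$.

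\textbf{Scaling covariance.} Substituting $(x,y)=(ax',ay')$ in the primal \cref{eq: eot_p} shows that the optimal entropic coupling between $(P^{(a)},Q^{(a)})$ at parameter $\eps$ equals $(S_a\otimes S_a)_\#\pi_{\tilde\eps}$, where $\pi_{\tilde\eps}$ is the optimal coupling between $(P_0,Q_0)$ at the rescaled parameter $\tilde\eps:=\eps/a^2$: the cost scales by $a^2$ while the Kullback--Leibler term is invariant. Matching this to the representation in \cref{eq: eot_d1} yields $f_\eps^{(a)}(x)=a^2 f_{\tilde\eps}(x/a)$, and similarly for $g_\eps^{(a)}$ and for the self-potentials $\alpha_\eps^{(a)},\beta_\eps^{(a)}$. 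Differentiating and invoking the gradient characterizations of the Entropic and Sinkhorn maps from \cref{eq: t_eps_expectation} and \cref{eq: sinkhorn_map}, together with the classical identity $T_0^{(a)}(x)=aT_0(x/a)$ for the Monge map, gives
\begin{equation*}
    T_\eps^{(a)}(x)=a\,T_{\tilde\eps}(x/a),\qquad (T_\eps^D)^{(a)}(x)=a\,T_{\tilde\eps}^D(x/a).
\end{equation*}
A change of variable in the $L^2(P^{(a)})$ norm then yields $\cR(T_\eps^{(a)})=a^2\,\cR(T_{\tilde\eps})$ and $\cR((T_\eps^D)^{(a)})=a^2\,\cR(T_{\tilde\eps}^D)$, so the ratio is invariant.

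\textbf{Finishing.} Applying \Cref{thm: err_diff_main} to $(P_0,Q_0)$ gives
\begin{equation*}
    \frac{\cR((T_\eps^D)^{(a)})}{\cR(T_\eps^{(a)})}=\frac{\cR(T_{\tilde\eps}^D)}{\cR(T_{\tilde\eps})}\xrightarrow{a\to 0}\frac{W_2^2(\bar P_0,\bar Q_0)}{\mathrm{Var}(Q_0)}.
\end{equation*}
It therefore suffices to choose a reference pair for which this limit exceeds $2M$, after which any sufficiently small $a$ forces the pre-limit ratio above $M$. A concrete choice is $P_0$ uniform on the unit ball of $\RR^d$ and $Q_0$ uniform on the ball of radius $\delta$: then $\mathrm{Var}(Q_0)\lesssim\delta^2$, while the reverse triangle inequality for $W_2$ against the point mass at the origin gives $W_2(\bar P_0,\bar Q_0)\ge\sqrt{\mathrm{Var}(P_0)}-\sqrt{\mathrm{Var}(Q_0)}$, which is bounded below by a positive constant for $\delta\le 1/2$, so the quotient diverges as $\delta\to 0$. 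I expect the main obstacle to be the bookkeeping for the covariance identities, in particular for $\alpha_\eps$; once those are in place, the rest of the argument is just the assembly above.
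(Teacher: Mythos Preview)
Your argument is correct and takes a genuinely different route from the paper. The paper's proof is a direct computation in the one-dimensional Gaussian case: with $P=\cN(0,1)$ and $Q=\cN(0,\eps^{2m})$, the closed-form expressions from \Cref{prop: teps_g} and \Cref{cor: tepsd_g} give explicit formulas for $\cR(T_\eps)$ and $\cR(T_\eps^D)$, and letting $m\to\infty$ drives the former to $0$ while the latter stays bounded away from $0$. Your approach instead exploits a structural observation: the ratio $\cR(T_\eps^D)/\cR(T_\eps)$ is invariant under simultaneous dilation of $P$ and $Q$, so a small-$\eps$ problem on rescaled measures is equivalent to a large-$\eps$ problem on the reference pair, at which point \Cref{thm: err_diff_main} takes over.

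The paper's proof is shorter and entirely self-contained once the Gaussian formulas are in hand, but it produces non-compactly-supported examples, which sit slightly outside the standing assumptions used elsewhere. Your argument is more conceptual---it makes transparent \emph{why} the small-$\eps$ failure is really a disguised large-$\eps$ failure---and yields compactly supported densities, keeping the counterexample inside the framework of \Cref{prop: map_conv} and \Cref{thm: err_diff_main}. The cost is that you must invoke \Cref{thm: err_diff_main} as a black box and carry the scaling bookkeeping for the potentials; both are routine, and your sketch handles them correctly. One cosmetic point: the final choice of $(P_\eps,Q_\eps)$ depends on $M$ through $\delta$ and on $\eps$ through $a$, so it may be cleaner to write $a=a(\eps,\delta)$ rather than $a=a(\eps,M)$.
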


\begin{figure*}[h!]
    \centering
    \includegraphics[width=\textwidth]{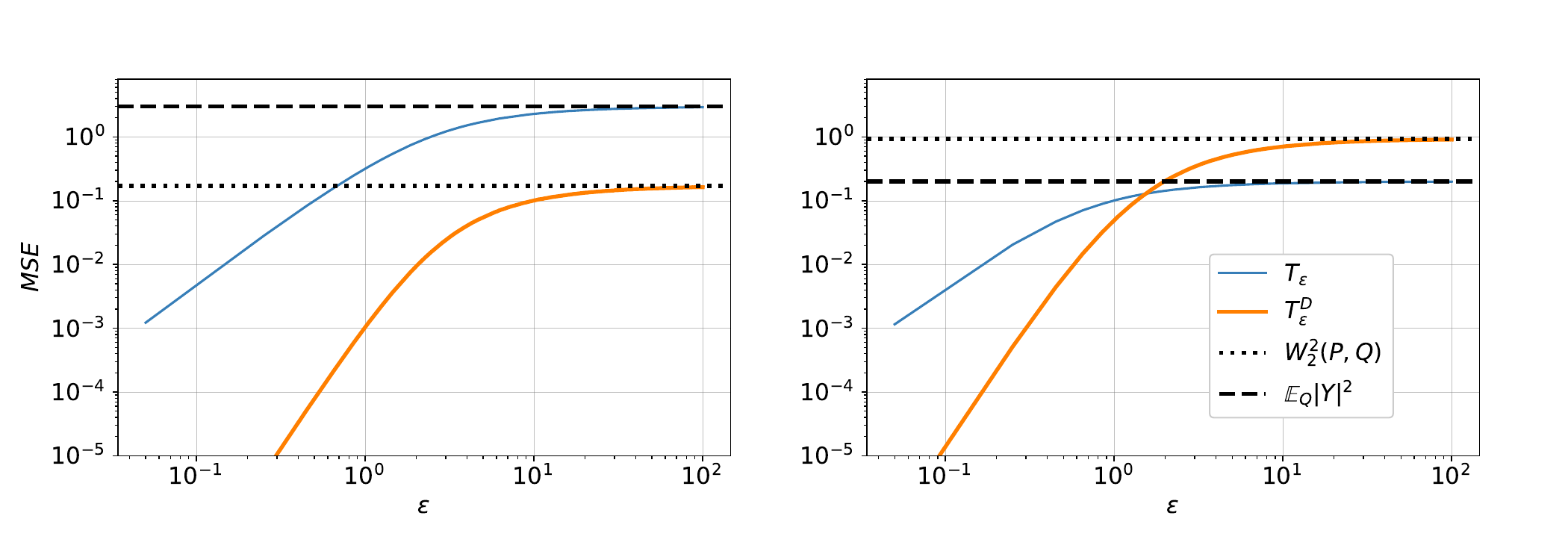}
    \caption{Validation of theoretical results from \Cref{sec:results} and \Cref{sec:gauss}, when $P = \cN(0,I_2)$ and $Q_1 = \cN(0,\text{diag}(2,1))$ (left figure) and $Q_2 = \cN(0,\text{diag}(0.1,0.1))$ (right figure). Making use of closed-form expressions, Monte Carlo integration was used to make the figures.}
    \label{fig:closedform_behavior}
\end{figure*}

\section{Case Study: Transport between Gaussians}\label{sec:gauss}

We now explicitly analyze the case $P = \cN(0,A)$ and $Q = \cN(b,B)$ with $A , B\succ 0$ (the case where the mean of the first measure is non-zero can be recovered with a simple translation). The optimal transport map between two such Gaussians has the following closed-form solution \citep{gelbrich1990formula}
\begin{align*}
    T_0(x) &= C^{AB}_0x + b := A^{-1/2}(A^{1/2}BA^{1/2})^{1/2}A^{-1/2}x + b.
\end{align*} Since the Entropic map is defined as the conditional mean of the optimal entropic plan, this also has a closed form:

\begin{proposition}\label{prop: teps_g}
\citep{janati2020entropic,mallasto2021entropy} For $P = \cN(0,A)$ and $Q = \cN(b,B)$ and $\eps > 0$, the Entropic map is given by $T_\eps(x) = C_\eps^{AB}x + b$, where
\begin{align}
    C_\eps^{AB} := (A^{-1/2}[A^{1/2}BA^{1/2}& + (\eps^2/4)I]^{1/2}A^{-1/2} - (\eps/2)A^{-1} ).
\end{align}
\end{proposition}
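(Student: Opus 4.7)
The plan has four steps: (i) establish that the optimal entropic coupling $\pi_\eps$ is Gaussian, (ii) reduce the problem to a finite-dimensional convex optimization over the cross-covariance matrix, (iii) solve the resulting algebraic Riccati equation, and (iv) extract $T_\eps$ via the Gaussian conditional-mean formula.

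For step (i), I would observe that the Sinkhorn iterations, initialized at the Gaussian product measure $P\otimes Q$, preserve the Gaussian class: each iterate integrates an exponential of a quadratic form against a Gaussian, which produces another exponential of a quadratic form. Since Sinkhorn converges to $\pi_\eps$ in $D_{\mathrm{KL}}$, the limit is a Gaussian with the prescribed marginals, so $\pi_\eps = \cN(m,\Sigma_\eps)$ with $m=(0,b)^\top$ and $\Sigma_\eps=\left(\begin{smallmatrix}A & K\\ K^\top & B\end{smallmatrix}\right)$ for some cross-covariance $K$. (Alternatively, one may directly verify that a Gaussian coupling with an appropriate $K$ admits a Radon--Nikodym derivative with respect to $P\otimes Q$ of the form $\exp((f(x)+g(y)-c(x,y))/\eps)$ for quadratic $f,g$, and then invoke uniqueness of $\pi_\eps$.)

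For step (ii), standard Gaussian identities give $\iint \tfrac12\|x-y\|^2\,\rd\pi = \tfrac12(\tr A + \tr B + \|b\|^2 - 2\tr K)$, and the Schur complement gives $\log\det\Sigma_\eps = \log\det A + \log\det(B - K^\top A^{-1}K)$, so $\KL{\pi}{P\otimes Q} = \tfrac12\log\det B - \tfrac12\log\det(B - K^\top A^{-1}K)$. Up to constants, the entropic OT problem becomes the strictly convex matrix problem
\[
\min_{K}\ J(K) := -\tr(K) - \tfrac{\eps}{2}\log\det\bigl(B - K^\top A^{-1}K\bigr),
\]
over $\{K:\, B - K^\top A^{-1}K \succ 0\}$. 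Matrix calculus (using $\nabla_K \log\det(B - K^\top A^{-1}K) = -2A^{-1}K(B - K^\top A^{-1}K)^{-1}$) yields the first-order condition $\eps A^{-1}K = B - K^\top A^{-1}K$. Since the right-hand side is symmetric, I take the ansatz $K = AC$ with $C$ symmetric, which reduces the condition to the algebraic Riccati equation $CAC + \eps C = B$.

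For step (iii), conjugation by $A^{1/2}$ with $\tilde{C} := A^{1/2}CA^{1/2}$ turns this into $(\tilde{C} + \tfrac{\eps}{2}I)^2 = A^{1/2}BA^{1/2} + \tfrac{\eps^2}{4}I$. Taking the unique positive square root and conjugating back yields
\[
C = A^{-1/2}\bigl(A^{1/2}BA^{1/2} + \tfrac{\eps^2}{4}I\bigr)^{1/2}A^{-1/2} - \tfrac{\eps}{2}A^{-1} = C_\eps^{AB},
\]
and $B - CAC = \eps C \succ 0$ confirms feasibility. Step (iv) is then immediate from the Gaussian conditional-mean formula: $T_\eps(x) = \E_{\pi_\eps}[Y\mid X=x] = b + K^\top A^{-1}x = b + (AC)^\top A^{-1}x = b + Cx = C_\eps^{AB}\,x + b$. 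The main obstacle is step (i) -- rigorously establishing Gaussianity of $\pi_\eps$, so that the finite-dimensional parameterization captures the global optimum -- after which the remainder is routine linear algebra with Schur complements and matrix square roots.
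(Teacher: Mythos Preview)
Your proposal is correct, but it takes a genuinely different route from the paper. The paper's proof is essentially a citation: it quotes the closed-form entropic coupling
\[
\pi_\eps = \cN\!\left(\begin{pmatrix}0\\b\end{pmatrix},\begin{pmatrix}A & \Sigma_\eps\\ \Sigma_\eps^\top & B\end{pmatrix}\right),\qquad \Sigma_\eps = A^{1/2}\bigl(A^{1/2}BA^{1/2}+\tfrac{\eps^2}{4}I\bigr)^{1/2}A^{-1/2}-\tfrac{\eps}{2}I,
\]
directly from \citet{janati2020entropic}, and then applies the standard Gaussian conditional-mean formula $\E_{\pi_\eps}[Y\mid X=x]=\Sigma_\eps^\top A^{-1}x+b$ to read off $C_\eps^{AB}$. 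In other words, the paper treats the form of $\pi_\eps$ as a black box and only performs your step~(iv). By contrast, you rederive the cited result from scratch: you argue Gaussianity of $\pi_\eps$, reduce to a finite-dimensional optimization over the cross-covariance $K$ using the Schur-complement identity for the KL divergence, solve the resulting Riccati equation $CAC+\eps C=B$ by the completion-of-squares trick $(\tilde C+\tfrac{\eps}{2}I)^2=A^{1/2}BA^{1/2}+\tfrac{\eps^2}{4}I$, and only then apply the conditional-mean formula. Your derivation is self-contained and makes transparent \emph{why} the $\eps^2/4$ shift and the $-\tfrac{\eps}{2}A^{-1}$ correction appear, at the cost of being considerably longer; the paper's version is a two-line appeal to the literature. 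Your acknowledged soft spot---rigorously establishing Gaussianity of $\pi_\eps$ in step~(i)---is exactly the content the paper outsources to the cited references; your alternative suggestion (verify directly that a Gaussian coupling satisfies the Schr\"odinger system with quadratic potentials, then invoke uniqueness of $\pi_\eps$) is the cleaner way to close that gap.
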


We now characterize the Sinkhorn map, and prove that in the small $\eps$ regime, it is a better estimator of $T_0$ than the Entropic map; see \Cref{fig:closedform_behavior}. Proofs pertaining to this section are found in \Cref{sec: gauss_proofs}.

\begin{proposition}\label{prop: alpha_g}
For $P = \cN(0,A)$, $\alpha_\eps(x) = \frac{1}{2}x^\top(I - C^{AA}_\eps)x$, where
\begin{align*}
C_\eps^{AA} = A^{-1/2}(A^2 + (\eps^2/4)I)^{1/2}A^{-1/2} - (\eps/2)A^{-1}\,.
\end{align*}
\end{proposition}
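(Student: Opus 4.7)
The plan is to reduce the self-potential computation to the already-computed Entropic map of Proposition~\ref{prop: teps_g}, specialised to the case where source and target coincide. The key observation is that the symmetric dual formulation of $\OTep(P,P)$ displayed in Section~\ref{subsec:sinkhorndiv} is exactly the standard two-potential dual of $\OTep$ with both marginals equal to $P$, restricted to the symmetric slice $\{f=g=\alpha\}$; by the symmetry of that dual under $(f,g)\mapsto(g,f)$ and the uniqueness of optimizers up to an additive constant, the full optimal pair can be taken to satisfy $f_\eps=g_\eps=\alpha_\eps$.

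With this identification in hand, I would then invoke the entropic Brenier identity $T_\eps=\mathrm{Id}-\nabla f_\eps$ from Section~\ref{subsec:entropicmap}, which applies verbatim to the self-problem and yields
\begin{equation*}
\mathbb{E}_{\pi_\eps^{PP}}[Y\mid X=x] \;=\; x - \nabla \alpha_\eps(x).
\end{equation*}
On the other hand, Proposition~\ref{prop: teps_g} applied with $Q=P$ (i.e.\ $B=A$ and $b=0$) gives an explicit expression for this conditional mean, namely the linear map $x\mapsto C_\eps^{AA}x$. Equating the two expressions produces the gradient identity
\begin{equation*}
\nabla \alpha_\eps(x) \;=\; (I - C_\eps^{AA})\,x.
\end{equation*}

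To finish, I would observe that $C_\eps^{AA}$ is symmetric: both $A^{-1/2}(A^2+(\eps^2/4)I)^{1/2}A^{-1/2}$ and $(\eps/2)A^{-1}$ are manifestly symmetric since $A\succ0$ and the matrix square root preserves symmetry. Consequently $I-C_\eps^{AA}$ is a symmetric matrix, and the gradient identity integrates to the quadratic form
\begin{equation*}
\alpha_\eps(x) \;=\; \tfrac12 x^\top (I - C_\eps^{AA})\, x \;+\; \mathrm{const},
\end{equation*}
with the additive constant absorbed into the usual normalisation of entropic potentials. As a mild sanity check, sending $\eps\to 0$ yields $C_\eps^{AA}\to A^{-1/2}(A^2)^{1/2}A^{-1/2}=I$, so $\alpha_\eps\to 0$, consistent with $\OT(P,P)=0$.

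The only step that requires any care is the first one, justifying that the ``one-potential'' formulation of $\OTep(P,P)$ in Section~\ref{subsec:sinkhorndiv} is genuinely the specialisation $f_\eps=g_\eps=\alpha_\eps$ of the two-potential dual, so that Proposition~\ref{prop: teps_g} can be applied unchanged. Everything else is a direct substitution followed by integration of a linear gradient field, and I do not expect any serious obstacle.
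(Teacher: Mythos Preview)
Your proposal is correct and follows essentially the same approach as the paper: both arguments apply the Gaussian entropic-plan formula with $B=A$, $b=0$ to compute the conditional mean $\E_{\gamma_\eps}[Y\mid X=x]=C_\eps^{AA}x$, then invoke the entropic Brenier identity to read off $\nabla\alpha_\eps(x)=(I-C_\eps^{AA})x$. You add a bit more detail than the paper (the justification that $f_\eps=g_\eps=\alpha_\eps$, the symmetry check to integrate the gradient, and the $\eps\to 0$ sanity check), but the route is the same.
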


\begin{corollary}\label{cor: tepsd_g}
For $P = \cN(0,A)$ and $Q = \cN(b,B)$, the debiased map estimator is $T_\eps^D(x) := \tilde{C}_\eps^{AB}x + b$, where
\begin{align}
\tilde{C}^{AB}_\eps = &(A^{-1/2}(A^{1/2}BA^{1/2} + (\eps^2/4)I)^{1/2}A^{-1/2} + I - A^{-1/2}(A^2 + (\eps^2/4)I)^{1/2}A^{-1/2}  ). 
\end{align}
\end{corollary}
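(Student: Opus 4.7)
The corollary is an almost immediate consequence of Propositions~\ref{prop: teps_g} and~\ref{prop: alpha_g}, combined with the definition of the Sinkhorn map in \Cref{eq: sinkhorn_map}. The plan is simply to read off the gradient of $\alpha_\eps$ in closed form, add it to the Entropic map, and observe a clean cancellation.

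First, I would invoke Proposition~\ref{prop: alpha_g}, which gives
\[
\alpha_\eps(x) = \tfrac{1}{2} x^\top (I - C_\eps^{AA}) x,
\]
and note that $C_\eps^{AA}$, being a sum of symmetric matrices (the symmetric square root $A^{-1/2}(A^2 + (\eps^2/4)I)^{1/2} A^{-1/2}$ and the scalar multiple of $A^{-1}$), is itself symmetric. Consequently,
\[
\nabla \alpha_\eps(x) = (I - C_\eps^{AA}) x.
\]

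Next, I would apply the defining identity $T_\eps^D(x) = T_\eps(x) + \nabla \alpha_\eps(x)$ from \Cref{eq: sinkhorn_map} and substitute the expression $T_\eps(x) = C_\eps^{AB} x + b$ from Proposition~\ref{prop: teps_g}. This yields
\[
T_\eps^D(x) = \bigl(C_\eps^{AB} + I - C_\eps^{AA}\bigr)\,x + b.
\]
The key observation is then that both $C_\eps^{AB}$ and $C_\eps^{AA}$ contain an identical $-(\eps/2) A^{-1}$ term, which cancels in the sum $C_\eps^{AB} - C_\eps^{AA}$. What remains is exactly
\[
\tilde{C}_\eps^{AB} = A^{-1/2}\bigl(A^{1/2}BA^{1/2}+(\eps^2/4) I\bigr)^{1/2}A^{-1/2} + I - A^{-1/2}\bigl(A^2+(\eps^2/4) I\bigr)^{1/2}A^{-1/2},
\]
as claimed. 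There is no real obstacle here: the entire content of the corollary is packaged into the two preceding propositions, and all that is required is the symmetric-quadratic-form gradient identity and tracking the cancellation of the $(\eps/2)A^{-1}$ terms. (The only care needed is to note that the corrective term coming from debiasing is precisely what erases the linear-in-$\eps$ term $-(\eps/2) A^{-1}$ present in $T_\eps$, which foreshadows why $T_\eps^D$ will turn out to enjoy a better small-$\eps$ approximation to the Brenier map $T_0 = C_0^{AB} x + b$.)
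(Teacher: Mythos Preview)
Your proposal is correct and follows essentially the same approach as the paper: invoke the definition $T_\eps^D = T_\eps + \nabla \alpha_\eps$, read off $\nabla \alpha_\eps(x) = (I - C_\eps^{AA})x$ from Proposition~\ref{prop: alpha_g}, and combine with Proposition~\ref{prop: teps_g} to obtain $\tilde{C}_\eps^{AB} = C_\eps^{AB} + I - C_\eps^{AA}$. Your write-up is slightly more explicit about the symmetry of $C_\eps^{AA}$ and the cancellation of the $-(\eps/2)A^{-1}$ terms, but the argument is the same.
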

\begin{proof}
This follows from our definition of $T_\eps^D(x) = T_\eps(x) + \nabla \alpha_\eps(x)$, and Propositions \ref{prop: teps_g} and \ref{prop: alpha_g}, noting that $\tilde{C}_\eps^{AB} = C_{\eps}^{AB}+I - C_{\eps}^{AA}$.
\end{proof}

With these closed-form expressions in hand, we can proceed with the following theorems. 
\begin{theorem}\label{thm: err_g1}
For $P = \cN(0,A)$ and $Q = \cN(b,B)$, 
$$ \|T_\eps - T_0\|_{L^2(P)}^2 \leq \frac{\eps^2}{4}I_0(P,P) + O_{A,B}(\eps^4)\, $$
where $I_0(P,P) = \Tr(A^{-1})$.
\end{theorem}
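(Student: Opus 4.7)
The plan is to use the closed-form expressions for $T_\eps$ (\Cref{prop: teps_g}) and $T_0$ to write the map discrepancy as a single linear map $x \mapsto D_\eps x$, and then evaluate
$\|T_\eps - T_0\|_{L^2(P)}^2 = \Tr(D_\eps A D_\eps^\top)$
using that $P = \cN(0,A)$.

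Setting $M := A^{1/2} B A^{1/2}$, the difference of linear factors reads
$D_\eps := C_\eps^{AB} - C_0^{AB} = A^{-1/2}\bigl([M+(\eps^2/4)I]^{1/2} - M^{1/2}\bigr)A^{-1/2} - (\eps/2) A^{-1}$.
The key observation is that since $A^{-1} = A^{-1/2} \cdot I \cdot A^{-1/2}$, one can factor out the outer $A^{-1/2}$'s and write $D_\eps = A^{-1/2} E_\eps A^{-1/2}$, where
$E_\eps := [M+(\eps^2/4)I]^{1/2} - M^{1/2} - (\eps/2) I$
is \emph{symmetric}. Substituting this into $\Tr(D_\eps A D_\eps^\top)$ and invoking the cyclic property of the trace, the inner $A^{-1/2} A A^{-1/2} = I$ collapses, yielding the clean identity $\cR(T_\eps) = \Tr(A^{-1} E_\eps^2)$.

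Next, decompose $E_\eps = F_\eps - (\eps/2)I$ with $F_\eps := [M+(\eps^2/4)I]^{1/2} - M^{1/2} \succeq 0$. Since $F_\eps$ commutes with $I$, expansion gives
$\Tr(A^{-1}E_\eps^2) = \tfrac{\eps^2}{4}\Tr(A^{-1}) - \eps\,\Tr(A^{-1}F_\eps) + \Tr(A^{-1}F_\eps^2)$.
The middle term is nonnegative (since $A^{-1/2}F_\eps A^{-1/2}\succeq 0$), so we may drop it for an upper bound. For the last term, the elementary scalar inequality $\sqrt{m+\eps^2/4} - \sqrt{m} \le \eps^2/(8\sqrt{m})$ applied in the eigenbasis of $M$ gives $F_\eps \preceq (\eps^2/8)M^{-1/2}$, hence $F_\eps^2 \preceq (\eps^4/64)M^{-1}$. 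Since $M^{-1} = A^{-1/2}B^{-1}A^{-1/2}$, one obtains $\Tr(A^{-1}F_\eps^2) \le (\eps^4/64)\Tr(A^{-2}B^{-1})$, which is $O_{A,B}(\eps^4)$.

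Combining the three terms yields the claimed bound, and the identity $I_0(P,P) = \Tr(A^{-1})$ follows from $\nabla\log p(x) = -A^{-1}x$ for Gaussian $P$, giving $I_0(P,P) = \E_{X\sim P}\|A^{-1}X\|^2 = \Tr(A^{-1}AA^{-1}) = \Tr(A^{-1})$. The main technical subtlety is the algebraic repackaging of $D_\eps$ into the symmetric form $A^{-1/2}E_\eps A^{-1/2}$: it is exactly this step that causes the $A$ factor in $\Tr(D_\eps A D_\eps^\top)$ to cancel and produces a trace formula to which spectral calculus on $M$ can be directly applied.
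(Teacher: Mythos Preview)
Your proof is correct and follows essentially the same route as the paper: both write $\|T_\eps-T_0\|_{L^2(P)}^2$ as a trace involving $D_\eps = C_\eps^{AB}-C_0^{AB}$, expand using $M_\eps$ (your $F_\eps$), drop the nonpositive cross term $-\eps\,\Tr(A^{-1}F_\eps)$, and bound the remaining $\Tr(A^{-1}F_\eps^2)$ by $O_{A,B}(\eps^4)$. Your presentation is slightly cleaner in that you factor $D_\eps = A^{-1/2}E_\eps A^{-1/2}$ at the outset (so the $A$ in $\Tr(D_\eps A D_\eps^\top)$ cancels immediately), and you use the elementary inequality $\sqrt{m+\eps^2/4}-\sqrt m \le \eps^2/(8\sqrt m)$ where the paper invokes a Taylor-expansion lemma; but these are cosmetic differences, not a different argument.
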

A key difference between \Cref{thm: err_g1} and \Cref{prop: pooladian_main} is that the leading-order error term here scales like ${\eps^2} I_0(P,P)$ instead of $\eps^2 I_0(P,Q)$. This is made possible by having a more precise control on $T_\eps$. While at a glance this seems like a minor difference, it allows us to improve the convergence rate for the debiased Entropic map. We provide a short proof sketch that outlines why this true; the full proofs for \Cref{thm: err_g1,thm: err_g2} can be found in \Cref{sec: gauss_proofs}. Essentially, \Cref{thm: err_g2} is possible because the leading-order contributions of $T_\eps$ are exactly canceled out by those from $\nabla \alpha_\eps$, resulting in improved estimates for the debiased map. 
\begin{theorem}\label{thm: err_g2}
For $P = \cN(0,A)$ and $Q = \cN(b,B)$, 
$$ \|T_\eps^D - T_0\|_{L^2(P)}^2 \lesssim \eps^4 + O_{A,B}(\eps^6)\,. $$
Thus, for $\eps$ small enough, $\cR(T^D_\eps) \leq \cR(T_\eps)$.
\end{theorem}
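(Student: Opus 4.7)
The plan is to reduce the entire statement to a matrix Taylor expansion by exploiting the closed forms from Proposition~\ref{prop: teps_g}, Proposition~\ref{prop: alpha_g}, and Corollary~\ref{cor: tepsd_g}. Since $T_\eps^D(x) - T_0(x) = (\tilde C_\eps^{AB} - C_0^{AB})\,x$ (the affine shift $b$ cancels), and $P = \cN(0,A)$, the squared $L^2(P)$ norm is just $\Tr\bigl(A(\tilde C_\eps^{AB} - C_0^{AB})^2\bigr)$. It therefore suffices to show that the matrix difference $\tilde C_\eps^{AB} - C_0^{AB}$ is $O(\eps^2)$ in operator norm; squaring then gives the promised $O(\eps^4)$.

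The core step is a Taylor expansion of $(M + (\eps^2/4)I)^{1/2}$. Factoring out $M^{1/2}$ and using the scalar series $(1+u)^{1/2} = 1 + u/2 - u^2/8 + \cdots$ yields
\begin{equation*}
(M + \tfrac{\eps^2}{4}I)^{1/2} = M^{1/2} + \tfrac{\eps^2}{8} M^{-1/2} - \tfrac{\eps^4}{128} M^{-3/2} + O(\eps^6)\,.
\end{equation*}
I would apply this to both $M = A^{1/2} B A^{1/2}$ (for $C_\eps^{AB}$) and $M = A^2$ (for $C_\eps^{AA}$, where conveniently $M^{1/2}=A$ and hence $C_0^{AA} = I$), and substitute into $\tilde C_\eps^{AB} = C_\eps^{AB} + I - C_\eps^{AA}$. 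Two exact cancellations now occur: the $-(\eps/2)A^{-1}$ terms in $C_\eps^{AB}$ and $C_\eps^{AA}$ annihilate one another, and the $I$ summand cancels the $C_0^{AA}=I$ piece. What survives at leading order is
\begin{equation*}
\tilde C_\eps^{AB} - C_0^{AB} = \tfrac{\eps^2}{8}\bigl(A^{-1/2}(A^{1/2}BA^{1/2})^{-1/2}A^{-1/2} - A^{-2}\bigr) + O(\eps^4)\,.
\end{equation*}
This is the precise sense in which ``the leading-order contributions of $T_\eps$ are exactly canceled by those from $\nabla\alpha_\eps$'': the rogue $(\eps/2)A^{-1}$ term dominating the error of the Entropic map is reproduced identically inside $\nabla\alpha_\eps$ and thereby subtracted off.

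Squaring this matrix and tracing against $A$ then delivers the bound $\cR(T_\eps^D) \lesssim \eps^4 + O_{A,B}(\eps^6)$. For the concluding comparison, combining Theorem~\ref{thm: err_g1} with the same expansion of $C_\eps^{AB} - C_0^{AB} = -(\eps/2)A^{-1} + O(\eps^2)$ shows that $\cR(T_\eps) = \tfrac{\eps^2}{4}\Tr(A^{-1}) + O(\eps^3)$, whose leading coefficient is strictly positive because $A \succ 0$; so for $\eps$ small enough, $\cR(T_\eps^D) = O(\eps^4)$ is necessarily smaller than $\cR(T_\eps) = \Theta(\eps^2)$. The main obstacle I anticipate is not the $O(\eps^4)$ bound itself, which falls out of the cancellation above, but rather carrying the expansion to high enough order to secure the sharper $O_{A,B}(\eps^6)$ remainder: this requires keeping the $M^{-3/2}$ term in both expansions and tracking the cross products of the $O(\eps^2)$ leading difference with the next correction, a mechanical but slightly delicate matrix bookkeeping exercise that is best organized by diagonalizing $A$ (or simultaneously diagonalizing $A$ and $B$ when they commute, and handling the general case by the same commutative functional calculus applied to the symmetric matrices $A^{-1/2}M^{\pm k/2}A^{-1/2}$).
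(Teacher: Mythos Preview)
Your proposal is correct and follows essentially the same route as the paper: reduce to $\Tr(\tilde D^2 A)$ with $\tilde D = A^{-1/2}(M_\eps - A_\eps)A^{-1/2}$, then invoke the matrix square-root expansion (the paper's Lemma~\ref{lem: sqrt_exp}) to get $M_\eps = \tfrac{\eps^2}{8}M^{-1/2}+O(\eps^4)$ and $A_\eps = \tfrac{\eps^2}{8}A^{-1}+O(\eps^4)$, from which the $\eps^4$ leading term and $O(\eps^6)$ remainder drop out directly. Your anticipated obstacle is a non-issue: once $\tilde D = O(\eps^2)$ with an $O(\eps^4)$ remainder, squaring automatically yields an $O(\eps^6)$ error without tracking the $M^{-3/2}$ correction or any diagonalization.
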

\begin{proof}[Proof sketch]
Using a Taylor expansion of the explicit expressions of the maps for the Gaussian-to-Gaussian case, one can check that
\begin{align*}
    T_\eps^D(x) &= T_\eps(x) + \nabla \alpha_\eps(x) \\
    &= \pran{T_0(x) - \frac{\eps}{2}\nabla \log(p(x)) + O(\eps^2)} + \pran{0 + \frac{\eps}{2}\nabla\log(p(x)) + O(\eps^2)}\,. \\
    &= T_0(x) + O(\eps^2).
\end{align*}
Thus, one expects $\|T_\eps^D - T_0\|^2_{L^2(P)}  = O(\eps^4)$.
\end{proof}
A visual representation of these rates appears in \Cref{fig:closedform_behavior}: for $\eps$ small, we observe convergence rates with order $O(\eps^2)$ and $O(\eps^4)$ for the biased and debiased MSE, respectively.

\section{Experiments}\label{sec:exp}
\subsection{Finite-sample map estimation}
The results of \S\S~\ref{sec:results}-\ref{sec:gauss} illustrate the benefits and pitfalls of debiasing to obtain good approximations of the Monge map between two \emph{known} distributions $P$ and $Q$.
This next section shows the extent to which these findings extend to the finite-sample regime. Our findings are double-edged: when the number of samples is sufficiently large and the dimension is moderate, debiasing can still have benefits.
However, for smaller sample sizes and larger dimensions, the statistical error begins to dominate, and debiasing yields \emph{worse} performance.
We illustrate both phenomena on a suite of benchmark examples.

In the finite-sample regime, the Entropic map can be written
\begin{align*}
    \hat{T}_\eps(x) = \frac{\sum_{i=1}^n Y_i e^{(\hat{g}_\eps(Y_i) - c(x,Y_i))/\eps}}{\sum_{i=1}^n e^{(\hat{g}_\eps(Y_i) - c(x,Y_i))/\eps}}\,,
\end{align*}
 where we write $(\hat{f}_\eps,\hat{g}_\eps)$ as the optimal entropic potentials for $\OTep(P_n,Q_n)$. Through the celebrated Sinkhorn's algorithm, it is well-known that these potentials can be computed in $\Tilde{O}(n^2)$ operations \citep{AltWeeRig17,PeyCut19}, and evaluating the map is done in linear time. A similar procedure can be performed for the self-potential $\hat{\alpha}_\eps$ though it empirically converges significantly faster \citep{feydy2019interpolating}. Together, this results in the finite-sample Sinkhorn map estimator
 \begin{align*}
     \hat{T}^D_\eps(x) &= \hat{T}_\eps(x) + \nabla \hat{\alpha}_\eps(x) \\
     &= \hat{T}_\eps(x) + x - \frac{\sum_{i=1}^n X_i e^{(\hat{\alpha}_\eps(X_i) - c(x,X_i))/\eps}}{\sum_{i=1}^n e^{(\hat{\alpha}_\eps(X_i) - c(x,X_i))/\eps}}\,.
 \end{align*}
 
 Our numerical experiments were performed using Google Colab Pro, where our code is adapted from \cite{chizat2020faster} and is publicly available. Across all plots, we compute the MSE via Monte-Carlo integration, where we always use $5\cdot10^5$ points. Unless otherwise specified, we perform our experiments across 20 random trials in order to generate error bars. More details are included in \Cref{sec: app_numerics}. 
\subsubsection{Smooth maps}
Restricting our attention to $P$ admitting a density with compact support, we define our target distribution as $Q := T_\sharp P$ for $T$ smooth.  For a given dimension, we fix our choice of $\eps$, compute the Entropic and Sinkhorn maps as $n$ varies, and then compute their MSE.

A first example is when $P = \text{Unif}([-1,1]^d)$ with the map
\begin{align*}
\textbf{\text{(E1)}} \quad    T(x) = \Omega_dx
\end{align*}
where we use the diagonal matrix example from \cite{paty2020regularity}, defined as $(\Omega_d)_{ii} := 0.8 - \frac{0.4}{d-1}(i-1) \quad (i \in [d]).$
\Cref{fig: d5_examples} shows the MSE of the two estimators as a function of the number of samples, where the Sinkhorn map is better by over an order of magnitude. Due to space constraints, the remainder of our synthetic experiments in this setting are deferred to \Cref{sec: app_numerics}. Though, in all cases, the Sinkhorn map always has a lower MSE than the Entropic map. 

\begin{figure}[h!]
    \centering
    \begin{subfigure}[h]{0.55\textwidth}
        \includegraphics[width=\textwidth]{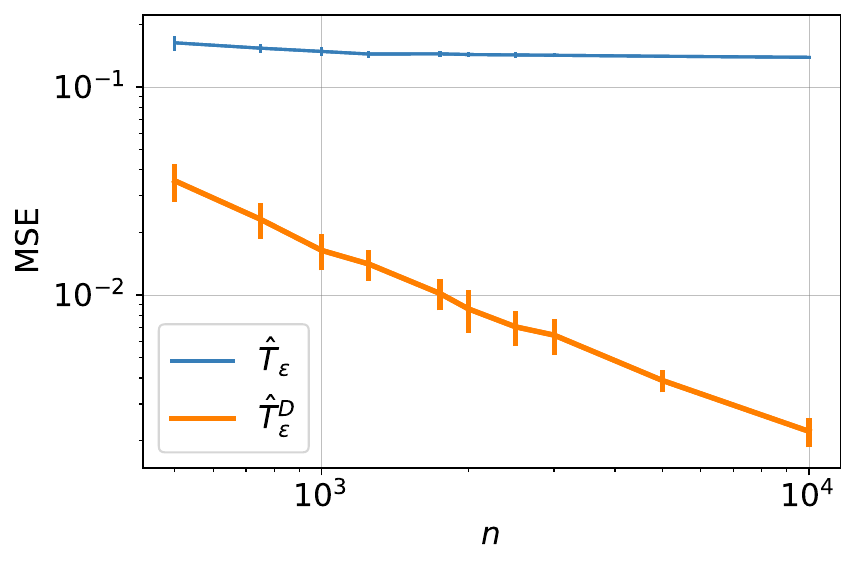}
        \caption{Example \textbf{(E1)} in $d=5$ with $\eps=0.5$}
    \end{subfigure}
    \caption{$\!$For moderate dimension and relatively large $\eps$, \textcolor{orange_plots}{$\hat{T}_\eps^D$} better approximates \textbf{(E1)} compared to \textcolor{blue_plots}{$\hat{T}_\eps$}.}
    \label{fig: d5_examples}
\end{figure}

\subsubsection{Investigating the impact of smoothness}
We briefly investigate the impact of debiasing when the optimal map is itself non-smooth. We show that this lack of smoothness only moderately affects the performance of our estimators. Taking $P = \text{Unif}([-1,1]^d)$ again, an example of a non-smooth Monge map is
\begin{align*}
    \textbf{(E2)} \, \, T(x) = \partial\pran{\frac{1}{2}\|x\|^2 + 2|x_1|} = x + 2e_1\text{sign}(x_1)\,. 
\end{align*}
Despite being the subgradient of a 1-strongly convex potential, \textbf{(E2)} is a discontinuous map that perturbs the input along the first coordinate. By approximating $\text{sign}(\cdot)$ with a smooth function, denoted $\text{sign}_\beta(\cdot)$\footnote{$\text{sign}_\beta(\cdot) =2(1+e^{-\beta x})^{-1} - 1$ with $\beta \gg 1$}, we can compare map estimation of \textbf{(E2)} and its smooth analogue:
\begin{align*}
    \textbf{(E2')} \, \, T(x) = x + 2e_1\text{sign}_\beta(x_1)\,.
\end{align*}
In \Cref{fig: nonsmooth_comp}, we see that the MSEs of both estimators are somewhat lower when the optimal map is smooth, with the Sinkhorn map still better.

\begin{figure}[h!]
    \centering
    \begin{subfigure}[h]{0.45\textwidth}
        \includegraphics[width=\textwidth]{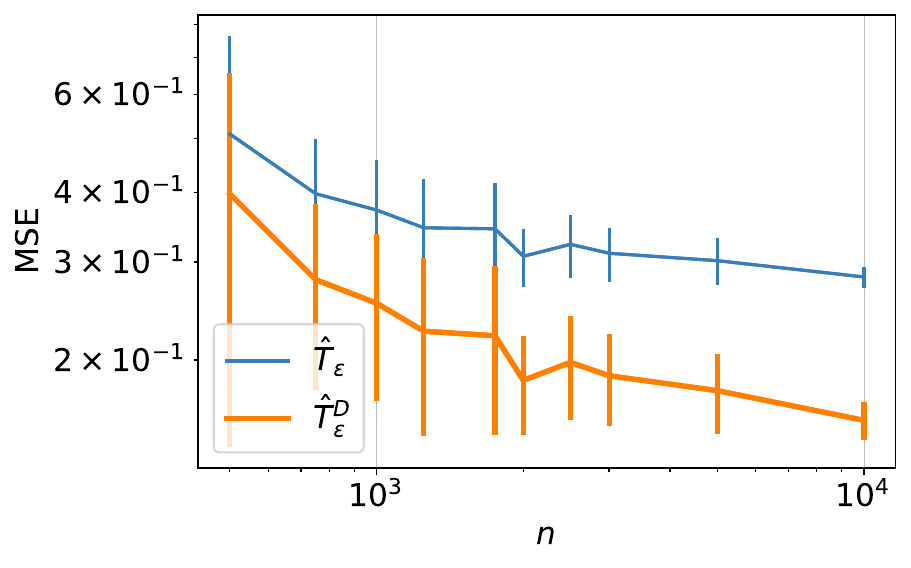}
        \caption{Example \textbf{(E2)} in $d=5$ with $\eps = 0.5$}
    \end{subfigure}
    \begin{subfigure}[h]{0.45\textwidth}
        \includegraphics[width=\textwidth]{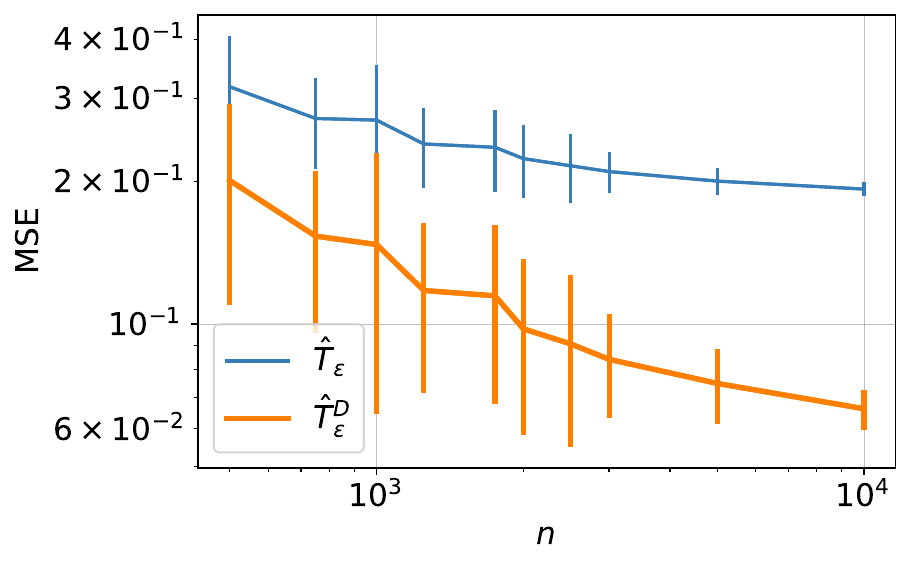}
        \caption{Example \textbf{(E2')} in $d=5$ with $\eps=0.5$ and $\beta=50$}
    \end{subfigure}
    \caption{Smoothing \textbf{(E2)} lowers the MSE for both \textcolor{blue_plots}{$\hat{T}_\eps$} and \textcolor{orange_plots}{$\hat{T}_\eps^D$}.}
    \label{fig: nonsmooth_comp}
\end{figure}
\subsection{Potential pitfalls in finite-sample estimation}\label{sec: finite_samp_main}
We return to the example of estimating the transport map between two Gaussian distributions. Recall the behavior predicted in  \cref{thm: err_g1} and \cref{thm: err_g2} and verified in \cref{fig:closedform_behavior}:  for $\eps$ sufficiently small, the debiased map provably gives a better approximation to the Monge map. However, \Cref{fig: g2g_finite_sample_concentrated} shows that the \emph{statistical} performance of the debiased map estimator can be substantially worse.

We fix the source distribution as $P = \cN(0,I_d)$ where $d$ is the dimension, and we randomly generate covariance matrices as outlined in \Cref{sec: g2g_mse_experiments} to create the target distribution $Q = \cN(0,\Sigma)$. We generate $\Sigma$ such that its eigenvalues are smaller than $1$, causing it to be more concentrated than the source.  Across 15 trials, we learn the map using $N$ points labeled in the figures. The lines labeled $N = \infty$ denote the error in the infinite-sample limit, obtained using the closed-form expressions given in §~\ref{sec:gauss}.

In $d=2$, we see the performance of $\hat{T}^D_\eps$ plateau at small values of $\eps$ even when using $N = 10^4$ points to learn the map, whereas $\hat{T}_{\eps}$ does not seem to experience this effect. For $d=15$, the performance of both estimators degrades, but the effects are worse for the Sinkhorn map, which is worse than the Entropic map for all $\eps \leq 1$.
We conjecture that the Sinkhorn map suffers more dramatically from finite-sample effects because the term $\nabla \hat \alpha_\eps(x)$ injects additional statistical noise into the map estimate.
\begin{figure}[h!]
    \centering
    \begin{subfigure}[h]{0.45\textwidth}
        \includegraphics[width=\textwidth]{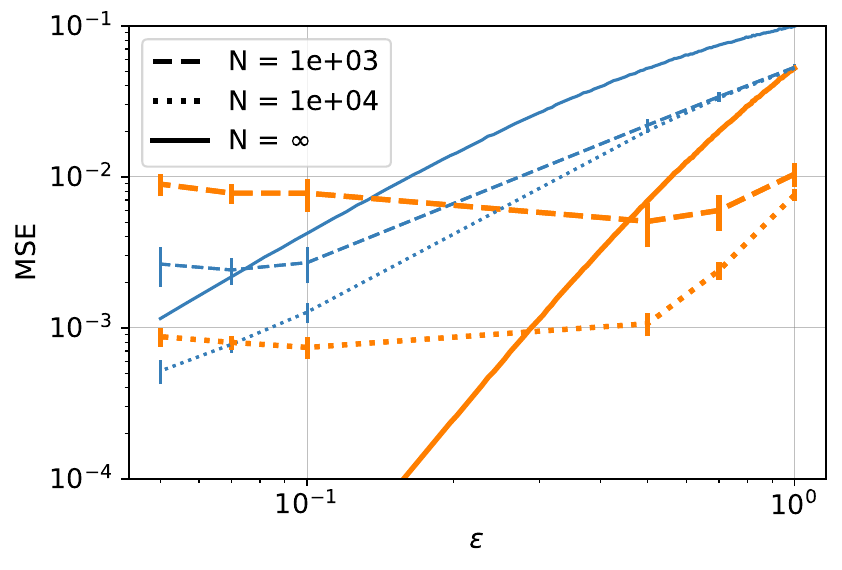}
        \caption{\textcolor{blue_plots}{$\hat{T}_\eps$} vs. \textcolor{orange_plots}{$\hat{T}_\eps^D$} with $\Sigma$ concentrated in $d=2$  }
    \end{subfigure}
    \begin{subfigure}[h]{0.45\textwidth}
        \includegraphics[width=\textwidth]{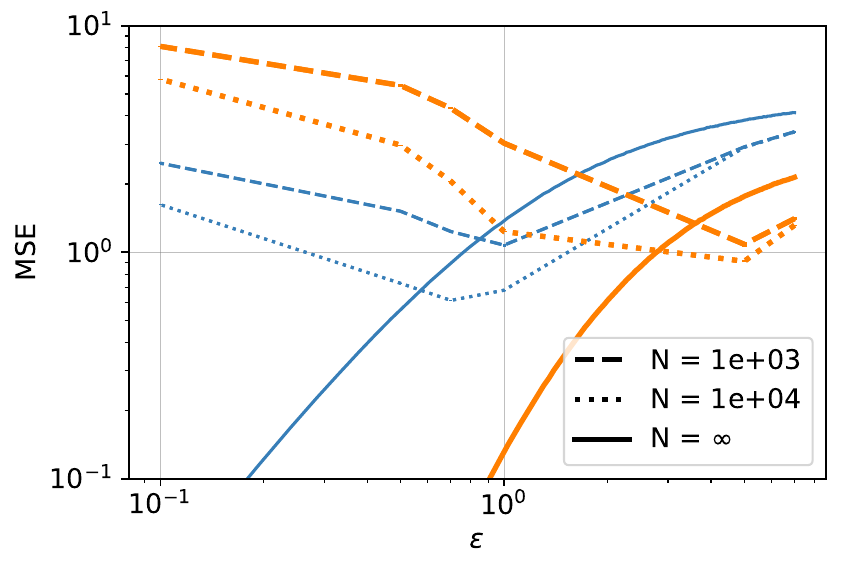}
        \caption{\textcolor{blue_plots}{$\hat{T}_\eps$} vs. \textcolor{orange_plots}{$\hat{T}_\eps^D$} with $\Sigma$ concentrated in $d=15$ }
    \end{subfigure}
    \caption{We observe strong finite-sample effects for the Sinkhorn map when the regularization parameter is small. While curse-of-dimensionality effects are pervasive in OT, Figure 5(a) shows that these effects occur even in $d=2$ for the Sinkhorn map.}
    \label{fig: g2g_finite_sample_concentrated}
\end{figure}

\subsection{Application: predicting trajectories of genomes}
We turn our attention to an application of map estimation using real-world data, where practitioners may not have a priori knowledge of a map even existing between the source and target measures. Such an example arises in  \citep{schiebinger2019optimal,moriel2021novosparc,Demetci2021.SCOTv2}, where the task is to infer cellular evolution from population measurements. The original data is temporal, where cell measurements are taken across 18 days, and each sampled data point consists of over 1000 gene expressions. Following the setup of \citet{schiebinger2019optimal}, we project the genes onto $\R^{30}$ using PCA. Finally, we normalize the data so that each datapoint lies in a ball of unit radius. We denote the source and target distributions by $\bm{X}$ and $\bm{Y}$, corresponding to the sampled data at \texttt{day0} and \texttt{day1}.

Across a range of $\eps$ values, we perform the following experiment across 20 trials where the train/test split is 50/50. For a fixed value of $\eps$, we use $(\bm{X}_{\text{train}},\bm{Y}_{\text{train}})$ to learn the mappings $\hat{T}_\eps$ and $\hat{T}^D_\eps$. Since there is no a priori notion of an optimal map, checking the MSE is not possible. Instead, we compute the (discrete) $W_2$ distance between a predicted mapping, such as ${\hat{T}}_\eps(\bm{X}_{\text{test}})$ (similarly for the Sinkhorn map), and $\bm{Y}_{\text{test}}$, newly sampled points from the target distribution.  We compute the (unregularized) $W_2$ distance using the Python OT (POT) package \citep{flamary2021pot}.

\begin{figure}[h!]
    \centering
    \includegraphics[width=0.45\textwidth]{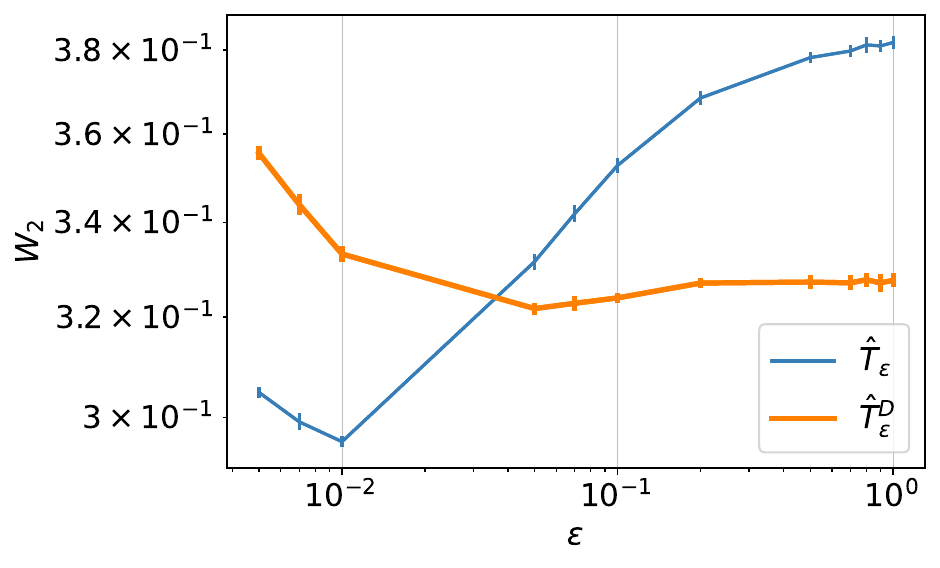}
    \caption{Comparing $W_2(\hat{T}(\bm{X}_{\text{test}}),\bm{Y}_{\text{test}})$ for different levels of $\eps$. As $\eps$ gets smaller, we see the same finite-sample effects from \Cref{sec: finite_samp_main}, and \textcolor{blue_plots}{$\hat{T}_\eps$} starts to perform better than \textcolor{orange_plots}{$\hat{T}^D_\eps$}.}
    \label{fig: genome_plot}
\end{figure}
\Cref{fig: genome_plot} shows the computed $W_2$ distance between the predicted mapping and test data. We notice that as $\eps$ get smaller, $\hat{T}_\eps$ begins to dominate. This is expected given our observations from \Cref{fig: g2g_finite_sample_concentrated} in the high-dimensional regime. As in \Cref{fig: g2g_finite_sample_concentrated}, the covariance of the target distribution \texttt{day1} has smaller eigenvalues than that of \texttt{day0}.

\Cref{fig: genome_viz} is a visualization of the first two components of the maps and the target distribution, which parallels \Cref{fig: viz_elliptic}: for large values of $\eps$, the Entropic map is heavily concentrated towards the mean of the target data. 
On the other hand, as $\ep \to 0$, the performance of $\hat T_\ep$ continues to improve, whereas that of $\hat T_\ep^D$ does not.

\begin{figure}[h!]
    \centering
    \includegraphics[width=0.45\textwidth]{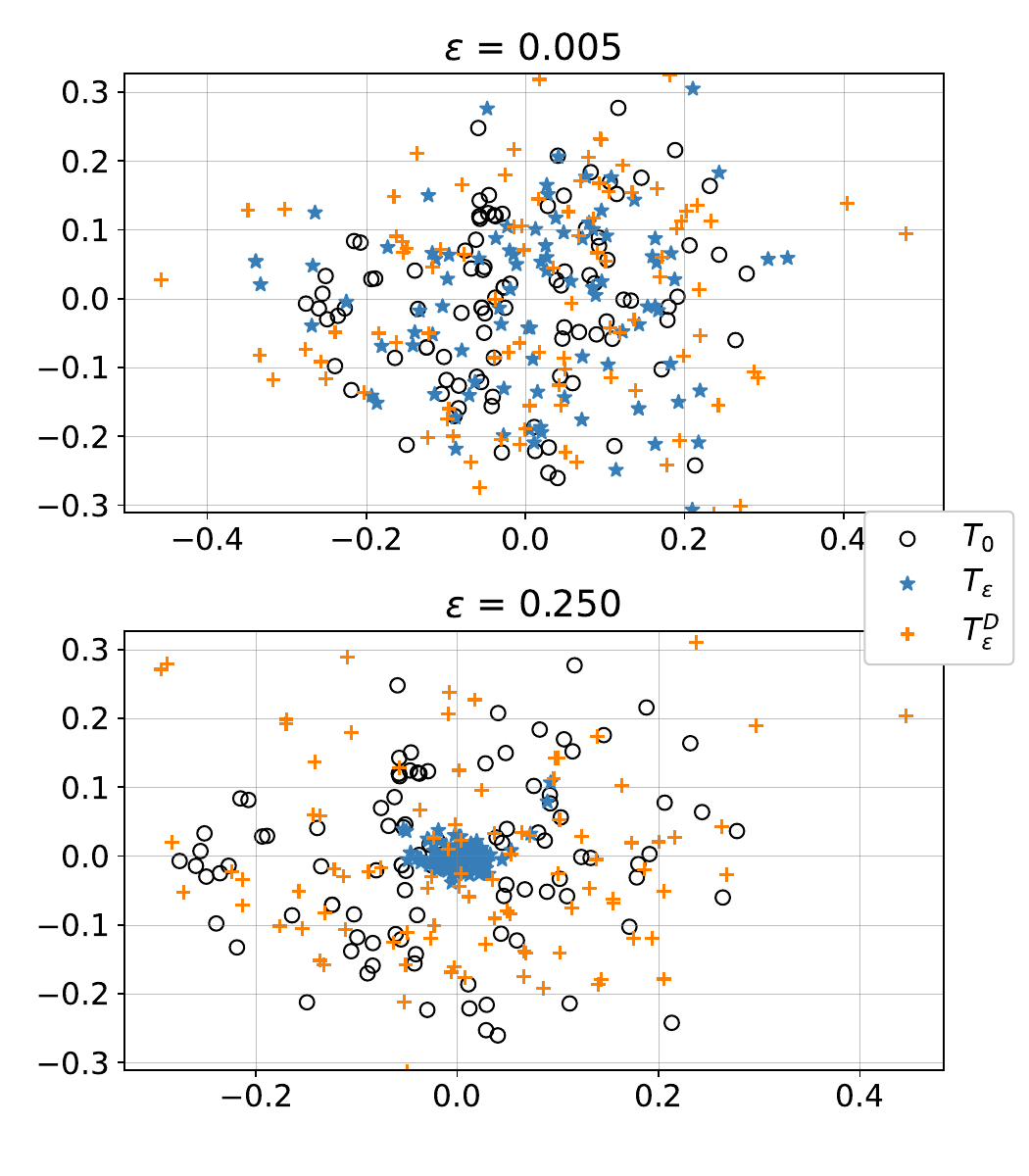}
    \caption{Visualizing genome predictions of the Entropic and Sinkhorn maps for small and large values of $\eps$ through the first two principal components. Much like \Cref{fig: viz_elliptic}, for large $\eps$, the Entropic map is biased towards the mean of the target.}
    \label{fig: genome_viz}
\end{figure}

\section{Conclusion}
In this work, we investigate theoretical and empirical properties of two estimators of Monge maps: the Entropic map and its debiased counterpart, the Sinkhorn map. In the population regime, we characterize their convergence as the regularization strength $\eps$ approaches either $0$ or $\infty$, and illustrate these results experimentally. Our findings indicate that debiasing does not always help, contradicting a dominant belief in the regularized 
OT literature. 
For example, in the Gaussian-to-Gaussian transportation problem, we prove (in the population setting) that the Sinkhorn map is a strictly better estimator as $\eps \to 0$. However, we also show, both theoretically and empirically, that for a \textit{fixed} $\eps$, the Sinkhorn map can perform worse than the Entropic map, and that these effects are magnified with small sample sizes. We confirm these findings experimentally on real datasets. On the other hand, when empirically estimating smooth maps, we notice that for an \textit{appropriately chosen} value of $\eps$, the Sinkhorn map can yield better estimates than the Entropic map. These findings indicate that the benefits of debiasing are not robust to the choice of the regularization parameter, and that it should be applied selectively.

\subsubsection*{Acknowledgements}
AAP and JNW acknowledge the support of a Google Research Collabs grant.

\bibliography{icmlsinkhornmonge}
\bibliographystyle{icml2022}

\newpage
\appendix
\section{Remaining (Entropic) OT facts}\label{sec: main_proofs_ot}
Let $\Lambda_\eps = (2\pi\eps)^{d/2}$. For two measures with bounded densities and compact support, $\OTep$ admits the following \textit{dynamical} formulation \citep{chizat2020faster,conforti2021formula}
\begin{align}\label{eq: dyn_otep}
\OTep(P,Q) + \eps\log(\Lambda_\eps) &= \inf_{\rho,v} \int_0^1 \int_{\R^d}  \pran{\frac 12 \|v(t,x)\|^2_2  + \frac{\eps^2}{8}\|\nabla_x \log(\rho(t,x))\|^2_2} \rho(t,x) \dd x \dd t \\
&- \frac{\eps}{2}(\Ent(P) + \Ent(Q) \nonumber,
\end{align}
subject to $\partial_t \rho + \nabla \cdot (\rho v) = 0$, called the \textit{continuity equation}, with $\rho(0,\cdot) = p(\cdot)$ and $\rho(1,\cdot) = q(\cdot)$. 

When $Q = P$, we can upper bound the left-hand side of \Cref{eq: dyn_otep} with the naive choices $v(t,x) \equiv 0$ and $\rho(t,\cdot) = p(\cdot)$, resulting in
\begin{align*}
    \OTep(P,P) + \ep\log(\Lambda_\eps) \leq \int_{\Rd} \frac{\eps^2}{8}\|\nabla \log(p(x))\|^2p(x) \dd x - \eps\Ent(P)\,,
\end{align*}
or equivalently,
\begin{align}\label{eq: otep_pp_bound}
    \OTep(P,P) + \eps\log(\Lambda_\eps) + \eps\Ent(P) \leq \frac{\eps^2}{8}I_0(P,P)\,. 
\end{align}
Finally, we recall the modified dual formulation of $\OTep(P,P)$ from \citet{pooladian2021entropic}
\begin{proposition}\label{prop: otpp_newdual}
Assume $P$ has finite second moment and let $\gamma_\eps$ be the optimal entropic plan from $P$ to itself. Then,
\begin{align}
    \OTep(P,P) = \sup_{\eta \in L^1(\gamma_\eps)} \int \eta \dd \gamma_\eps - \eps \iint e^{(\eta(x,y) - \frac{1}{2}\|x-y\|^2)/\eps}\dd P(x) \dd P(y) + \eps\,.
\end{align}
\end{proposition}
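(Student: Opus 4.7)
The plan is to verify the identity by a reparametrization that turns the variational problem on the right into an elementary exponential inequality. Let $\alpha_\eps$ denote the optimal self-potential for $\OTep(P,P)$ from \S~\ref{subsec:sinkhorndiv}, so that $\dd\gamma_\eps(x,y) = \exp((\alpha_\eps(x)+\alpha_\eps(y)-c(x,y))/\eps)\dd P(x)\dd P(y)$ and, since $\gamma_\eps$ is a probability measure, $\iint \exp((\alpha_\eps(x)+\alpha_\eps(y)-c(x,y))/\eps)\dd P(x)\dd P(y) = 1$.

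The first step I would take is the substitution $\eta(x,y) = \alpha_\eps(x) + \alpha_\eps(y) + \eps\,\psi(x,y)$ for a free variable $\psi$. Two simplifications then follow: using that $\gamma_\eps$ has both marginals equal to $P$ together with $\OTep(P,P) = 2\int \alpha_\eps \dd P$, the linear term becomes $\int \eta \dd\gamma_\eps = \OTep(P,P) + \eps \int \psi \dd\gamma_\eps$; and the Gibbs factor rewrites as $e^{(\eta(x,y)-c(x,y))/\eps}\dd P(x)\dd P(y) = e^{\psi(x,y)}\dd\gamma_\eps(x,y)$. Combining these, the right-hand side of the proposition becomes
$$\OTep(P,P) + \eps\Big(\int \psi \dd\gamma_\eps + 1 - \int e^\psi \dd\gamma_\eps\Big).$$

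The second step is the pointwise inequality $e^\psi \geq 1 + \psi$. Integrating against the probability measure $\gamma_\eps$ yields $\int \psi \dd\gamma_\eps + 1 - \int e^\psi \dd\gamma_\eps \leq 0$, with equality if and only if $\psi \equiv 0$ $\gamma_\eps$-a.s., i.e.\ $\eta(x,y) = \alpha_\eps(x) + \alpha_\eps(y)$. This simultaneously bounds the supremum above by $\OTep(P,P)$ and certifies attainability at the separable choice $\eta = \alpha_\eps \oplus \alpha_\eps$, completing the proof.

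The main obstacle I anticipate is only a mild technical one: one must check that the integrals are well-defined for every admissible $\eta \in L^1(\gamma_\eps)$. If $\iint e^{(\eta-c)/\eps}\dd P \dd P = +\infty$, the right-hand-side objective is $-\infty$ and there is nothing to prove; otherwise, the substitution keeps all quantities finite because $\psi \in L^1(\gamma_\eps)$ follows from $\eta \in L^1(\gamma_\eps)$ together with $\alpha_\eps \in L^1(P)$, while the upper bound $\psi \leq e^\psi - 1$ controls the negative part of $\int \psi \dd\gamma_\eps$. Beyond this bookkeeping the argument is really driven by the reparametrization trick plus the one-line inequality $e^\psi \geq 1 + \psi$.
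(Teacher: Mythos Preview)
Your argument is correct. The paper itself does not supply a proof of this proposition: it is merely \emph{recalled} from \citet{pooladian2021entropic} in \Cref{sec: main_proofs_ot}, so there is no in-paper proof to compare against. Your reparametrization $\eta = \alpha_\eps \oplus \alpha_\eps + \eps\psi$ is exactly the change of variables that reveals the statement as an instance of the Gibbs variational formula $\KL{\gamma_\eps}{P\otimes P} = \sup_{\psi}\big\{\int \psi\,\rd\gamma_\eps - \int (e^\psi - 1)\,\rd(P\otimes P)\big\}$, after which the inequality $e^\psi \geq 1+\psi$ gives the upper bound and $\psi \equiv 0$ gives attainment. The bookkeeping you flag (handling $\int e^{(\eta-c)/\eps}\,\rd P\,\rd P = +\infty$ separately, and noting $\alpha_\eps\oplus\alpha_\eps \in L^1(\gamma_\eps)$ because $\gamma_\eps$ has marginals $P$) is the right way to make the substitution rigorous; the remark about controlling the negative part of $\int\psi\,\rd\gamma_\eps$ is superfluous once you already have $\psi \in L^1(\gamma_\eps)$, but it does no harm.
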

\section{Proofs from \Cref{sec:results}}\label{sec: main_proofs}

For a function $f : \R^d \to \R$, we define its \textit{convex conjugate} by $f^*(y) := \sup_{x}\{x^\top y - f(x)\}$. If $X \sim \cN(\mu,\Sigma)$, recall that for any $t \in \R^d$,
\begin{equation}\label{eq: mgf}
    \E[\exp\{t^\top X\}] = \exp\left\{t^\top\mu + \frac{1}{2}t^\top \Sigma t\right\}\,.
\end{equation}
\begin{proof}[Proof of \Cref{lem: alpha_eps_bound}]
Our proof technique is based on \citep{pooladian2021entropic,pal2019difference}. Let $q^x_\eps(y)$ denote the density of $\cN(x,\eps I_d)$.
Using \Cref{prop: otpp_newdual}, writing $\gamma_\eps$ as the optimal entropic plan from $P$ to itself, we plug in the test function
$$ \eta(x,y) = \eps(\chi(x,y) - \log(\Lambda_\eps) - \log(p(y)) ) $$
with $\chi(x,y) = h(x)^\top(y-x) - (\eps/2)\|h(x)\|^2$, where we omit the dependence on $h$ for the time being. This results in
\begin{align}
\OTep(P,P) &\geq \sup_\chi \iint \eps (\chi(x,y) - \log(\Lambda_\eps) - \log(p(y)) ) \dd \gamma_\eps(x,y) - \eps \iint  e^{\chi(x,y)} q^x_\eps(y) \dd y \dd P(x) + \eps\\
&= \sup_\chi \eps \iint \chi(x,y) \dd \gamma_\eps(x,y) - \eps \log(\Lambda_\eps) - \eps\Ent(P)  - \eps \iint  e^{\chi(x,y)} q^x_\eps(y) \dd y \dd P(x) + \eps
\end{align}
Rearranging, this yields
\begin{align}
\sup_\chi \iint \chi(x,y) \dd \gamma_\ep(x,y) - \iint \brac{e^{\chi(x,y)} - 1}q^x_\eps(y) \dd y \dd P(x) \leq \frac{1}{\eps}\pran{\OTep(P,P) + \ep\log(\Lambda_\ep) + \ep\Ent(P)} \,.
\end{align}
It follows by \Cref{eq: otep_pp_bound} that the right-hand side is bounded above by $\frac{\ep}{8}I_0(P,P)$. By well-known properties of moment generating functions of Gaussians (see \Cref{eq: mgf}), it also holds that
\begin{align}
\E_{Y \sim q_\eps^x}\brac{e^{v^\top(Y-x) - (\eps/2)\|v\|^2} - 1} = 0
\end{align}
for all $v \in \R^d$. Identifying $h(x)$  with $v$, we see that this implies 
\begin{align}
\iint \brac{e^{\chi(x,y)} - 1}q^x_\eps(y) \dd y \dd P(x) = \int \left\{ \E_{Y \sim q_\eps^x}\brac{e^{h(x)^\top(Y-x) - (\eps/2)\|h(x)\|^2} - 1}  \right\} \dd P(x) = 0\,.
\end{align}
Our bound now simplifies to
\begin{align}
\sup_h \iint h(x)^\top (y-x) - \frac{\eps}{2}\|h(x)\|^2 \dd \gamma_\ep - 0 \leq \frac{\eps}{8}I_0(P,P).
\end{align}
Taking $h(x) = -\eps^{-1}\nabla \alpha_\eps$, we use the fact that $\E_{\gamma_\ep}[Y|X=x] = x - \nabla \alpha_\eps(x)$ by definition, and that $\gamma_\eps \in \Pi(P,P)$, resulting in 
\begin{align}
\iint (-\eps^{-1}\nabla \alpha_\eps(x))^\top(y - x) - \frac{1}{2\eps}\|\nabla \alpha_\eps(x)\|^2 \dd \gamma_\eps  = \frac{1}{2\eps}\int \| \nabla \alpha_\eps(x)\|^2 \dd P(x) \leq \frac{\eps}{8}I_0(P,P).
\end{align}
Rearranging the remaining constants gives the desired bound.
\end{proof}

\begin{proof}[Proof of \Cref{thm: eps_d_small}]
Expanding the square directly and applying Cauchy-Schwarz gives
\begin{align*}
\cR(T^D_\eps) &= \| T_\eps^D - T_0\|^2_{L^2(P)} \\
&= \| (T_\eps - T_0) + \nabla \alpha_\eps \|^2_{L^2(P)} \\
&= \|T_\eps - T_0\|^2_{L^2(P)} + \|\nabla\alpha_\eps\|^2_{L^2(P)} + 2\langle T_\eps - T_0,\nabla\alpha_\eps \rangle_{L^2(P)} \\
&\leq \cR(T_\eps) + \|\nabla\alpha_\eps\|^2_{L^2(P)} + 2\sqrt{\cR(T_\eps)}\|\nabla\alpha_\eps\|_{L^2(P)} \\
&\leq \cR(T_\eps) + (\eps^2/4)I_0(P,P) + \eps I_0(P,P)\sqrt{\cR(T_\eps)} \\
&\leq \cR(T_\eps) + \frac{3\eps}{2}I_0(P,P)\sqrt{\cR(T_\eps)}\,,
\end{align*}
where the last inequality holds holds for $\eps$ small enough. The second claim follows by applying \Cref{prop: pooladian_main} and expanding the terms.
\end{proof}

\begin{proof}[Proof of \Cref{prop: map_conv}]
Since $\pi_\eps$ and $P\otimes Q$ have first marginal $P$, we begin by applying the chain-rule for the KL divergence:
\begin{align*}
\KL{\pi_\eps}{P\otimes Q} = \E_{X \sim P} \KL{\pi_\eps^X}{Q}\,,
\end{align*}
where we denote $\pi_\eps^X$ as the (random) conditional density of $\pi_\eps$ given $X$. Since $P$ and $Q$ are compactly supported, we can apply the $T_1$-transport inequality \citep{van2014probability}, 
\begin{align*}
\KL{\pi_\eps}{P\otimes Q} &= \E_{X \sim P} \KL{\pi_\eps^X}{Q} \gtrsim \E_{X \sim P}W_1^2(\pi_\eps^X,Q).
\end{align*}
We can further lower-bound this by the squared difference of the means of $\pi_\eps^X$ and $Q$,
\begin{align*}
\KL{\pi_\eps}{P\otimes Q} &\gtrsim \E_{X \sim P}W_1^2(\pi_\eps^X,Q) \geq \E_{X \sim P}\| \E_{\pi_\eps}[Y|X] - \mu_Q\|^2 .
\end{align*}
Taking the limit $\eps \to \infty$, we invoke \Cref{lem: kl_limit} and obtain the first claim:
\begin{align*}
\|T_\eps - \mu_Q\|^2_{L^2(P)} \lesssim \KL{\pi_\eps}{P\otimes Q} \to 0.
\end{align*}
By an identical argument, it follows that 
\begin{align*}
\| \text{Id} - \nabla \alpha_\eps - \mu_P \|^2_{L^2(P)} \to 0 \quad (\eps \to \infty)\,,
\end{align*}
and thus the second claim follows by an application of the  triangle inequality:
\begin{align*}
\| T_\eps^D - (\mu_Q - \mu_P + \text{Id})\|^2_{L^2(P)} &= \| (T_\eps - \mu_Q) + (\nabla\alpha_\eps - (- \mu_P + \text{Id}))\|^2_{L^2(P)} \\
&\lesssim \| T_\eps - \mu_Q \|^2_{L^2(P)}+ \| \nabla\alpha_\eps + \mu_P - \text{Id}\|^2_{L^2(P)} \\
& \to 0
\end{align*}
\end{proof}

\begin{proof}[Proof of \Cref{thm: err_diff_main}]
By an application of the (reverse) triangle inequality,
\begin{align*}
\left| \|T_\eps - T_0\|_{L^2(P)} - \sqrt{\mathrm{Var}(Q)} \right| &= \left| \|T_\eps - T_0\|_{L^2(P)} - \|\mu_Q - Y\|_{L^2({Q})} \right| \\
&=  \left| \|T_\eps - T_0\|_{L^2(P)} - \|\mu_Q - T_0\|_{L^2({P})} \right| \\
&\leq \| T_\eps - \mu_Q \|_{L^2(P)}\,,
\end{align*}
which converges to zero by \Cref{prop: map_conv} as $\eps \to \infty$. Taking squares, we have the first claim; the second follows by a similar argument.
\end{proof}

\begin{proof}[Proof of \Cref{thm: counter_ex}]
Fix $\eps \in (0,1)$ and take $P = \cN(0,1)$ and $Q = \cN(0,\sigma^2)$. Set $\sigma^2 = \eps^{2m}$ for $m>0$ to be specified. Then
\begin{align*}
&\cR(T_\eps) = ((\eps^{2m} + (\eps^2/4))^{0.5} - (\eps/2) + \eps^m)^2, \\ 
&\cR(T_\eps^D) = ((\eps^{2m} + \eps^2/4)^{0.5} + 1 - \eps^m - (1+\eps^2/4)^{0.5})^2\,.
\end{align*}
Taking $m\to\infty$, we see that $\cR(T_\eps) \to 0$ and $\cR(T^D_\eps) \to (\eps/2) + 1 - (1+\eps^2/4)^{0.5} > 0$. Thus, for any $M > 0$, there exists an $m$ large enough that $\cR(T^D_\eps) \geq M \cdot \cR(T_\eps)$.
\end{proof}

\section{Proofs from \Cref{sec:gauss}}\label{sec: gauss_proofs}

\begin{proof}[Proof of \Cref{prop: teps_g}]
The optimal entropic plan between two Gaussians is known to be the following multivariate Gaussian distribution \citep{janati2020entropic}
\begin{align}\label{eq: pieps_g2g}
\pi_\eps = N\left( \begin{pmatrix}
0 \\
b
\end{pmatrix},
\begin{pmatrix}
A  & \Sigma_\eps \\
\Sigma_\eps^\top & B 
\end{pmatrix}
 \right),
\end{align} 
where $\Sigma_\eps = A^{1/2}(A^{1/2}BA^{1/2} + (\eps^2/4)I)^{1/2}A^{-1/2} - (\eps/2)I.$ The conditional mean of this joint multivariate Gaussian is known to have the following closed form solution \citep{petersen2008matrix}, which completes the claim
\begin{align*}
\E_{\pi_\eps}[Y|X=x] = \Sigma_\eps^\top A^{-1}x + b = C_\eps^{AB}x+b.
\end{align*}
\end{proof}

\begin{proof}[Proof of \Cref{prop: alpha_g}]
If $\gamma_\eps$ is the optimal entropic plan arising from $\OTep(P,P)$, mimicking \Cref{eq: pieps_g2g}, we have
\begin{align*}
\gamma_\eps = N\left( \begin{pmatrix}
0 \\
0
\end{pmatrix},
\begin{pmatrix}
A  & \tilde{\Sigma}_\eps \\
\tilde{\Sigma}_\eps^\top & A 
\end{pmatrix}
 \right)\,,
\end{align*}
where $\tilde{\Sigma}_\eps = A^{1/2}(A^2 + (\eps^2/4)I)^{1/2}A^{-1/2}-(\eps/2)I$. The conditional expectation in this case simplifies to
\begin{align*}
    \E_{\pi_\eps^A}[Y|X=x] = \tilde{\Sigma}_\eps^\top A^{-1}x = C_\eps^{AA}x.
\end{align*}
By combining this computation with \cref{eq: t_eps_expectation} and \cref{eq: sinkhorn_map}, we have that
\begin{align*}
    \nabla \alpha_\eps(x) = x - (C_\eps^{AA}x) = (I-C_{\eps}^{AA})x\,.
\end{align*}
\end{proof}

\begin{proof}[Proof of \Cref{thm: err_g1}]
We begin by creating short-hand notation for convenience. Writing $M = A^{1/2}BA^{1/2}$, then $C_0^{AB} = A^{-1/2}M^{1/2}A^{-1/2}$. We also denote $M_\eps := (M+(\eps^2/4)I)^{1/2} - M^{1/2},$ which is positive semidefinite. A direct computation yields (cf. \cite{petersen2008matrix})
\begin{align}
    \|T_\eps - T_0\|_{L^2(P)}^2 &= \E_{X \sim P}\|DX\|^2 = \E_{X\sim P}X^\top D^2 X = \Tr(D^2A),  
\end{align}
where $D := C_\eps^{AB} - C_0^{AB} = A^{-1/2}M_\eps A^{-1/2} - (\eps/2)A^{-1}$. Computing the square and post-multiplying by $A$ gives
\begin{align*}
    D^2A &= \frac{\eps^2}{4}A^{-2}A + (A^{-1/2}M_\eps A^{-1/2})^2A - \frac{\eps}{2}[A^{-1/2}M_\eps A^{-1/2}A^{-1} + A^{-1}A^{-1/2}M_\eps A^{-1/2}]A \\
    &= \frac{\eps^2}{4}A^{-1} + (A^{-1/2}M_\eps A^{-1/2})^2A - \frac{\eps}{2}[A^{-1/2}M_\eps A^{-1/2} + A^{-1}A^{-1/2}M_\eps A^{-1/2}A].
\end{align*}
Since the trace operator is linear and invariant under permutations, we arrive at
\begin{align*}
    \Tr(D^2A) &= \frac{\eps^2}{4}\Tr(A^{-1}) + \Tr(A^{-1/2}M_\eps A^{-1/2}A^{-1/2}M_\eps A^{-1/2}A) - \eps\Tr(A^{-1/2}M_\eps A^{-1/2}) \\
    &= \frac{\eps^2}{4}\Tr(A^{-1}) + \Tr(A^{-1/2}(M_\eps^2 - \eps M_\eps)A^{-1/2}) \\
    &\leq \frac{\eps^2}{4}\Tr(A^{-1}) + \Tr(M_\eps^2A^{-1}).
\end{align*}
Invoking \Cref{lem: sqrt_exp}, we arrive at
\begin{align*}
    \|T_\eps - T_0\|^2_{L^2(P)} \leq \frac{\eps^2}{4}\Tr(A^{-1}) + \eps^4\Tr(A^{-1})C_{\lambda_{\min}(M)},,
\end{align*}
but a direct computation shows that
\begin{align*}
    I_0(P,P) &= \int \|\nabla \log p(x))\|^2 p(x) \dd x \\
    &= \int \|A^{-1}x\|^2 p(x) \dd x \\
    &=  \E_{X \sim \cN(0,A)}[X^\top A^{-2}X]\\
    &= \Tr(A^{-2}A) \\
    &= \Tr(A^{-1})\,,
\end{align*}
which completes the proof.
\end{proof}

\begin{proof}[Proof of \Cref{thm: err_g2}]
We keep the same definitions of $M$ and $M_\eps$ as in \Cref{thm: err_g1}, and also introduce 
\begin{align*}
A_\eps := (A^2+(\eps^2/4)I)^{1/2} - A\,.
\end{align*}
As last time, we have by direct computation that 
\begin{align*}
    \| T_\eps^D - T_0\|^2_{L^2(P)} = \Tr(\tilde{D}^2A)\,
\end{align*}
where
\begin{align*}
    \tilde{D} := \tilde{C}_\eps^{AB} - C_0^{AB} = A^{-1/2}\pran{M_\eps - A_\eps}A^{-1/2}\,,
\end{align*}
where both $A_\eps, M_\eps \succeq 0$. We begin by computing the squared term inside the trace:
\begin{align*}
    (\tilde{C}_\eps^{AB} - C_0^{AB})^2 &= (A^{-1/2}M_\eps A^{-1/2})^2 + (A^{-1/2}A_\eps A^{-1/2})^2 - A^{-1/2}M_\eps A^{-1} A_\eps A^{-1/2} - A^{-1/2}A_\eps A^{-1}M_\eps A^{-1/2}.
\end{align*}
By the permutation invariance of trace, we arrive at
\begin{align*}
    \Tr(\tilde{D}^2A) &= \Tr((M_\eps^2 + A_\eps^2 - 2A_\eps M_\eps)A^{-1}).
\end{align*}
Again, by \Cref{lem: sqrt_exp}, we have
\begin{align*}
    M_\eps = \frac{\eps^2}{8}M^{-1/2} + O_{A,B}(\eps^4), \, A_\eps = \frac{\eps^2}{8}A^{-1} + O_A(\eps^4)\,.
\end{align*}
Taking squares and expanding within the trace operator yields the desired claim
\begin{align*}
    \Tr((M^2_\eps + A_\eps^2 - 2A_\eps M_\eps)A^{-1}) = \frac{\eps^4}{64}\Tr(M^{-1}A^{-1} + A^{-3} - 2M^{-1/2}A^{-2}) + O_{A,B}(\eps^6)
\end{align*}
\end{proof}

\section{Lemmas}\label{sec:lemmas}
\begin{lemma}\label{lem: sqrt_exp}
For $C \succ 0$, the following expansion holds
\begin{equation*}
    (C + \eta I)^{1/2} = C^{1/2} + \sum_{k\geq 1}(-1)^{k-1}\eta^kC^{1/2-k}c_k,
\end{equation*}
with $c_k = 2^{-k}\cdot(2k-1)!!$. In particular, taking $\eta = \eps^2/4$, we have that the expansion is, to second order
\begin{align}
    (C + (\eps^2/4)I)^{1/2} = C^{1/2} + \frac{\eps^2}{8}C^{-1/2} - \frac{\eps^4}{128}C^{-3/2} + R_\eps\,,
\end{align}
where the remainder term $R_\eps$ satisfies $\|R_\eps\| \lesssim \lambda^{-5/2}_{\min}(C) \eps^6$.
\end{lemma}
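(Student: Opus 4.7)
The approach is to reduce the matrix identity to the scalar binomial expansion via functional calculus. Since $C \succ 0$, I diagonalize $C = U\Lambda U^\top$ with $\Lambda = \mathrm{diag}(\lambda_1,\ldots,\lambda_d)$ and each $\lambda_i \geq \lambda_{\min}(C) > 0$. Because $C$ commutes with $\eta I$, the matrix square root satisfies $(C+\eta I)^{1/2} = U(\Lambda+\eta I)^{1/2}U^\top$, so it suffices to expand the scalar function $(\lambda+\eta)^{1/2}$ for each eigenvalue of $C$ and reassemble.

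For each eigenvalue, factor $(\lambda+\eta)^{1/2}=\lambda^{1/2}(1+\eta/\lambda)^{1/2}$ and apply Newton's generalized binomial theorem: the series $(1+x)^{1/2}=\sum_{k\geq 0}\binom{1/2}{k}x^k$ converges absolutely for $|x|<1$. With $x = \eta/\lambda$, the convergence requirement $\eta/\lambda < 1$ holds uniformly over the spectrum as soon as $\eta<\lambda_{\min}(C)$, which is the small-$\eps$ regime of interest. Reassembling via the spectral decomposition gives the matrix series
\[
(C+\eta I)^{1/2} = C^{1/2} + \sum_{k\geq 1}\binom{1/2}{k}\eta^k\, C^{1/2-k},
\]
and the coefficients $\binom{1/2}{k}$ coincide (up to sign) with the constants $c_k$ listed in the statement.

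For the explicit second-order expansion with $\eta = \eps^2/4$, I compute the first three binomial coefficients, $\binom{1/2}{0}=1$, $\binom{1/2}{1}=1/2$, $\binom{1/2}{2}=-1/8$, which reproduce the terms $C^{1/2}+(\eps^2/8)C^{-1/2}-(\eps^4/128)C^{-3/2}$. The remainder collects all summands with $k\geq 3$:
\[
R_\eps = \sum_{k\geq 3}\binom{1/2}{k}\left(\tfrac{\eps^2}{4}\right)^k C^{1/2-k}.
\]
Every summand commutes with $C$ (all are functions of $C$), so $\|R_\eps\|$ equals the maximum over eigenvalues of the corresponding scalar tail. Using the crude bound $|\binom{1/2}{k}|\leq 1$ and the geometric sum $\sum_{k\geq 3}r^k = r^3/(1-r)$, for $\eps^2/(4\lambda_{\min}(C))\leq 1/2$ I obtain
\[
\|R_\eps\| \;\leq\; \lambda_{\min}(C)^{1/2}\sum_{k\geq 3}\left(\frac{\eps^2}{4\lambda_{\min}(C)}\right)^k \;\lesssim\; \lambda_{\min}(C)^{-5/2}\,\eps^6,
\]
as claimed.

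There is no substantive obstacle; the only mild technical point is verifying that the spectral reduction allows the scalar binomial series to be lifted termwise (which is immediate since all operators are simultaneously diagonalized by $U$) and that the geometric tail estimate can be applied uniformly in the eigenvalues of $C$.
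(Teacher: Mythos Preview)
Your proposal is correct and follows essentially the same approach as the paper: reduce to the scalar binomial expansion of $(x+\eta)^{1/2}$ via the spectral theorem (the paper phrases this as ``since $I$ commutes with $C\succ 0$, consider the Taylor expansion of the scalar function and replace with matrix counterparts''), then bound the remainder by the dominant $\lambda_{\min}(C)^{-5/2}$ contribution. Your version is simply a more fleshed-out execution of the same idea, including the explicit geometric tail estimate that the paper leaves implicit.
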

\begin{proof}
Since $I$ commutes with $C \succ 0$, the above claim follows by considering the Taylor expansion of the scalar function $f(x) = (x+\eta)^{1/2}$ and replacing the relevant quantities with their matrix counterparts. The second claim follows by noticing that the second order term is dominated by the smallest eigenvalue of $C$ raised to the relevant power, which amounts to $\lambda_{\min}^{-5/2}(C)I$, 
\end{proof}


\begin{lemma}\label{lem: kl_limit}
Let $\pi_\eps$ be the minimizer for $\OTep(P,Q)$ under with $P$ and $Q$ having compact support. Then $\KL{\pi_\eps}{ P \otimes Q} \to 0$ as $\eps \to \infty$.
\end{lemma}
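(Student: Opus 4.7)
The plan is to exploit the optimality of $\pi_\eps$ by comparing against the trivial test plan $P \otimes Q$, which lies in $\Pi(P,Q)$. Since $P$ and $Q$ have compact support, the quadratic cost $c(x,y) = \tfrac{1}{2}\|x-y\|^2$ is bounded on $\supp(P) \times \supp(Q)$, so $M \defeq \iint c \dd (P \otimes Q) < \infty$. Using $P \otimes Q$ as a feasible plan in the definition of $\OTep(P,Q)$ and noting that $\KL{P \otimes Q}{P \otimes Q} = 0$, I get the upper bound $\OTep(P,Q) \leq M$.

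Next, I use the lower bound coming from $\pi_\eps$ itself: by definition,
\begin{equation*}
\OTep(P,Q) = \iint c \dd \pi_\eps + \eps \KL{\pi_\eps}{P \otimes Q} \geq \eps \KL{\pi_\eps}{P \otimes Q},
\end{equation*}
since the cost is non-negative. Combining the two inequalities yields $\eps \KL{\pi_\eps}{P \otimes Q} \leq M$, i.e.\ $\KL{\pi_\eps}{P \otimes Q} \leq M/\eps$, which tends to $0$ as $\eps \to \infty$.

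The argument is essentially a one-liner after the comparison with $P \otimes Q$ is set up; there is no real obstacle here. The only thing worth being slightly careful about is confirming that the inf in the primal formulation of $\OTep$ may indeed be evaluated at $P \otimes Q$ (which requires $P \otimes Q \in \Pi(P,Q)$, trivially true) and that $M$ is finite (which follows from compact support of $P$ and $Q$, giving a uniform bound on $\|x-y\|$ over the product of the supports). Neither requires any nontrivial machinery.
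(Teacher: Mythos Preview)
Your proof is correct and follows essentially the same approach as the paper: compare $\pi_\eps$ against the feasible plan $P\otimes Q$ to get $\OTep(P,Q)\le M$, then use non-negativity of the cost term to isolate $\eps\,\KL{\pi_\eps}{P\otimes Q}\le M$ and send $\eps\to\infty$. If anything, your write-up is slightly cleaner, since the paper's parenthetical that the cost term is ``bounded above'' is not the relevant fact---what matters, as you state, is that it is non-negative.
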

\begin{proof}[Proof of \Cref{lem: kl_limit}]
Note that $\OTep(P,Q)$ is bounded above by $C^* := \iint \frac{1}{2}\|x-y\|^2 \dd (P\otimes Q) > 0$, so
$$ \iint \frac{1}{2}\|x-y\|^2 \dd \pi_\eps + \eps \KL{\pi_\eps}{ P\otimes Q} \leq C^*.$$
The first term on the left-hand side is also bounded above by some constant, as $\pi_\eps \in \Gamma(P,Q)$, where $P$ and $Q$ have compact support. Taking the limit of $\eps \to \infty$, it must be the case that $\KL{\pi_\eps}{P\otimes Q} \to 0$.
\end{proof}
\vfill
\pagebreak
\section{Numerics}\label{sec: app_numerics}
\subsection{Compact, elliptically contoured distributions}\label{sec: ellip_distr}
Here we recall some facts about how to sample from compact, elliptically contoured distributions, as outlined in \cite{chizat2020faster}. To sample from such a (centered) distribution, one can use the following simple recipe: for a given covariance matrix $A \in \R^d$, 
\begin{enumerate}
    \item Sample $U \sim \text{Unif}(\mathbb{S}^{d-1})$ (Uniform distribution over the $d-1$ sphere),
    \item Sample $Z \sim \cN(0,1)$,
    \item Set $R := \alpha|\arctan{Z/\beta}|^{1/d}$, where $\alpha > 0$ such that $\E[R^2]=d$,
    \item $X = R \cdot A^{1/2}U$.
\end{enumerate}
In Step 3, we use the choice of $\beta=2$ and $R$ is ultimately computed via Monte Carlo integration with $10^7$ points. For simplicity, we choose the same covariance matrices as in \cite{chizat2020faster} as their code is publicly available.
\subsection{Synthetic examples}
We run a log-stabilized version of Sinkhorn's algorithm to alleviate numerical instabilities that would otherwise arise for small choices of $\eps$. For both $d=5$ and $d=10$, we have $N_S = 50000$ as the number of points to approximate the MSE $\|\hat{T} - T_0\|^2_{L^2(P)}$ via Monte-Carlo integration after having learned the maps. For fixed $\eps$ (we chose $\eps = 0.5$ in both $d=5$ and $d=10$), we sample $n$ points from the source (where $n$ varies between $100$ and $10000$) and map the points, effectively generating samples from the target distribution. We run this procedure 20 times to generate error bars on the plots.

Example \textbf{(E3)} is between two elliptically contoured distributions, where the optimal transport map is known to be a linear map. Specifically, for centered, compact, elliptically contoured distributions with covariance $A$ for the source and $B$ for the target, then the optimal transport map is written as
$$ T(x) = A^{1/2}(A^{-1/2}BA^{-1/2})^{1/2}A^{1/2}x.$$

Example \textbf{(E4)} is the coordinate-wise exponential map, $T(x) = (\exp(x_i))_{i=1}^d$, with source distribution $P = \text{Unif}([-1,1]^d)$.

\subsubsection*{Remaining plots in 5D}

\begin{figure}[h]
    \centering
    \begin{subfigure}[h]{0.4\textwidth}
        \includegraphics[width=\textwidth]{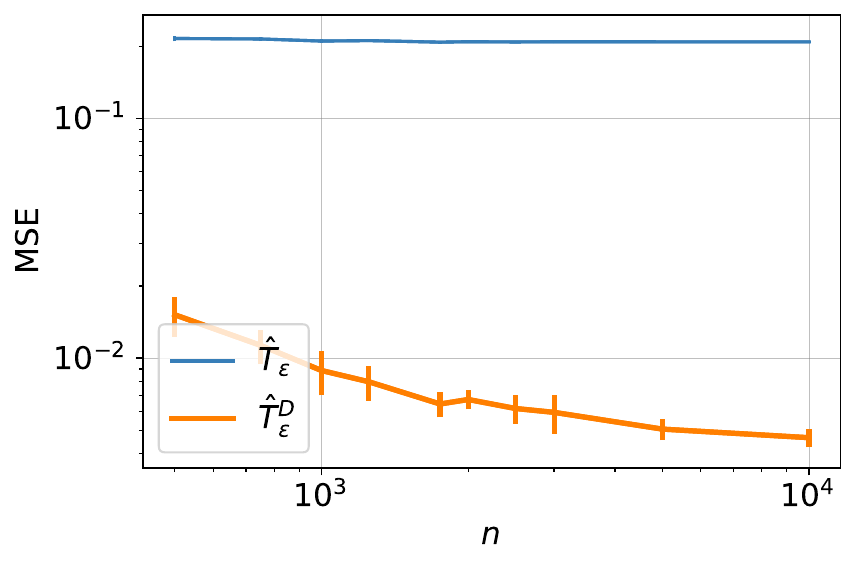}
        \caption{Example \textbf{(E3)}}
    \end{subfigure}
    \begin{subfigure}[h]{0.4\textwidth}
        \includegraphics[width=\textwidth]{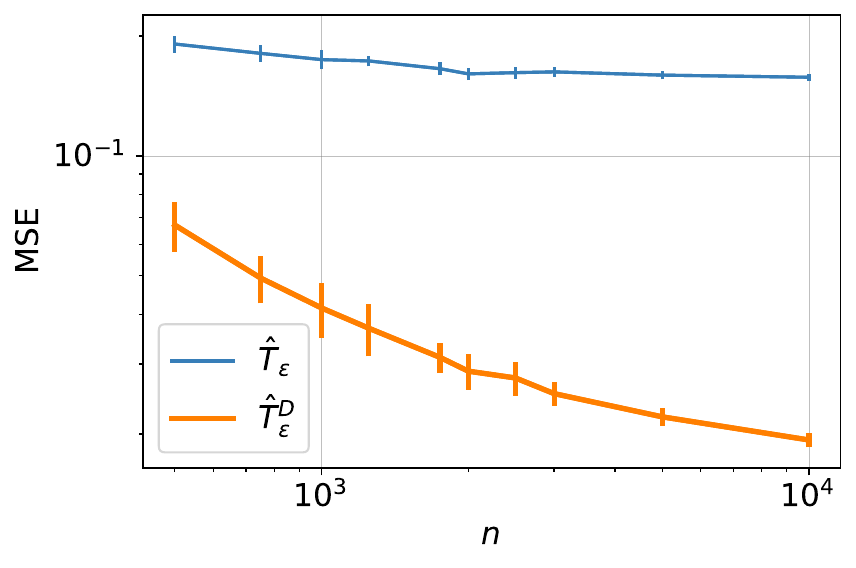}
        \caption{Example \textbf{(E4)}}
    \end{subfigure}
    \label{fig: d5_examples_remaining}
    \caption{Remaining examples in $d=5$}
\end{figure}
\pagebreak
\subsubsection*{Examples plots in 10D}

\begin{figure}[h]
    \centering
    \begin{subfigure}[h]{0.45\textwidth}
        \includegraphics[width=\textwidth]{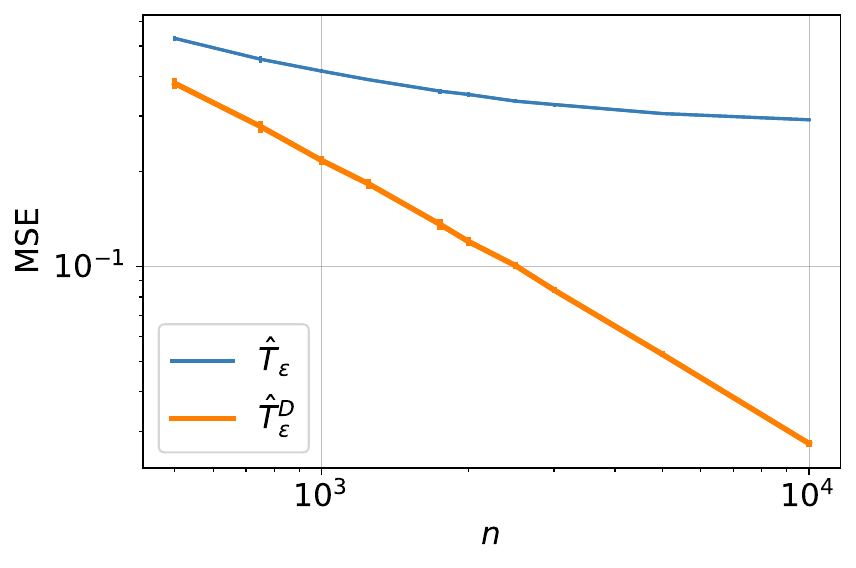}
        \caption{Example \textbf{(E1)}}
    \end{subfigure}
    \begin{subfigure}[h]{0.45\textwidth}
        \includegraphics[width=\textwidth]{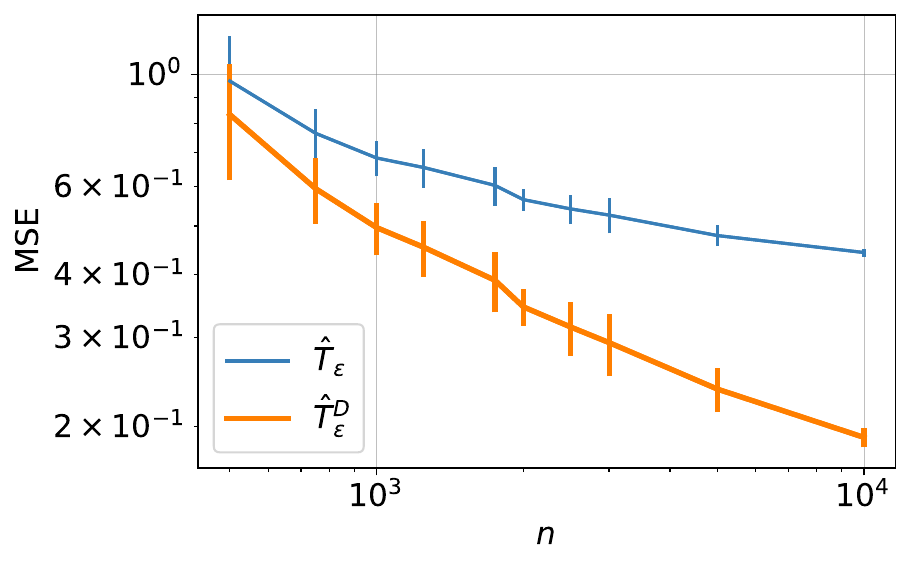}
        \caption{Example \textbf{(E2)}}
    \end{subfigure}
    \begin{subfigure}[h]{0.45\textwidth}
        \includegraphics[width=\textwidth]{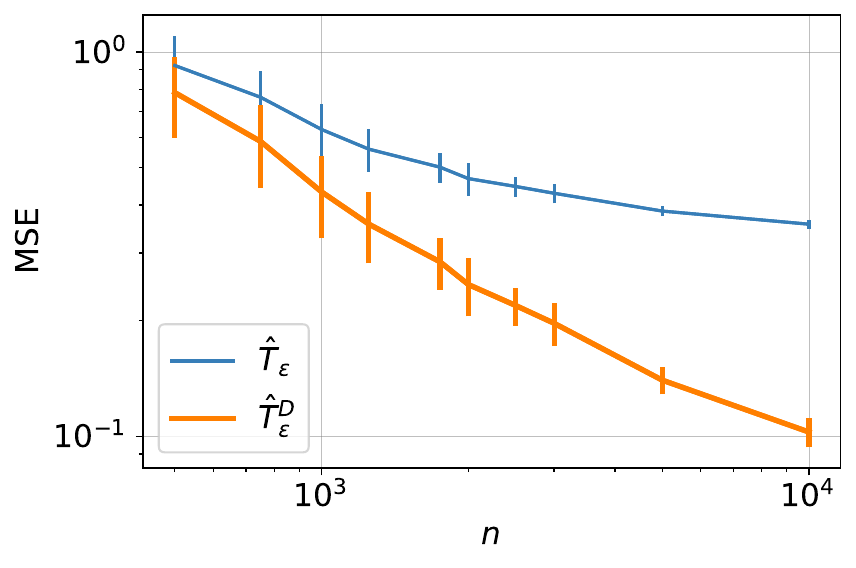}
        \caption{Example \textbf{(E2')}}
    \end{subfigure}
    \begin{subfigure}[h]{0.45\textwidth}
        \includegraphics[width=\textwidth]{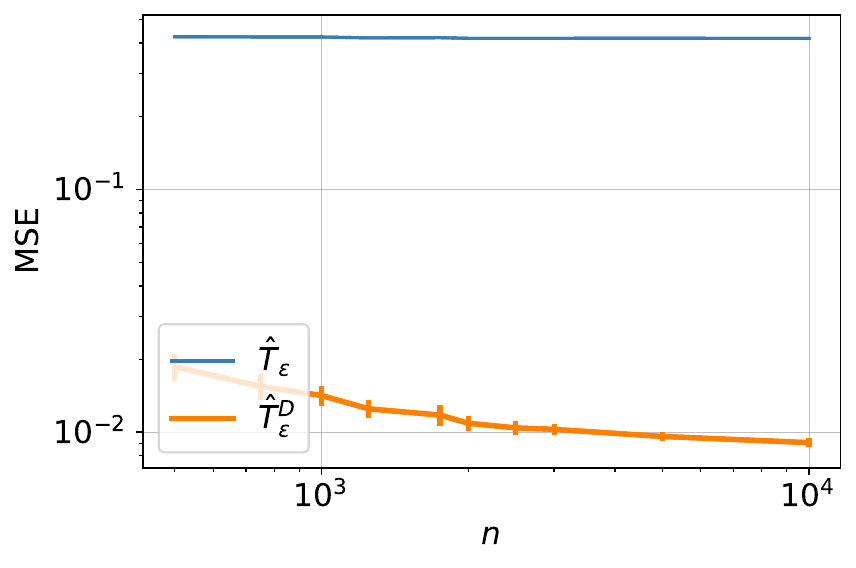}
        \caption{Example \textbf{(E3)}}
    \end{subfigure}
    \begin{subfigure}[h]{0.45\textwidth}
        \includegraphics[width=\textwidth]{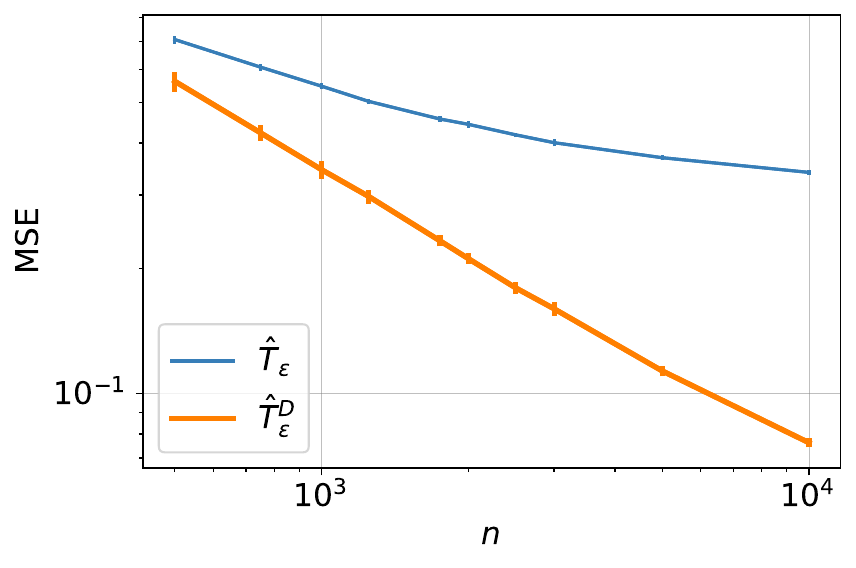}
        \caption{Example \textbf{(E4)}}
    \end{subfigure}
    \caption{Previous examples but with $d=10$ }
    \label{fig: d10_examples}
\end{figure}

\pagebreak
\subsection{Gaussian to Gaussian map estimation}\label{sec: g2g_mse_experiments}
\subsubsection{Randomly generating covariance matrices}
We follow a procedure similar to what is outlined in \cite{chizat2020faster}, which we re-write here for completeness. We begin with a matrix $M \in \R^{d\times k}$ with $M_{ij} \sim N(0,1)$ i.i.d. entries, and $k=d/\alpha$ for $\alpha\in(0,1)$. Defining $\Tilde{A} = MM^\top \succeq 0$, it is known that the eigenvalues of $\Tilde{A}$ are contained (with high probability) a small enlargement of the interval $[(1 -\sqrt{\alpha})^2,(1+\sqrt{\alpha})^2 ]$; this follows from Gordon's inequality~\citep[Theorem 5.32]{MR2963170}. Choosing $\alpha=1/3$ and writing $A = \gamma A/\Tr(\Tilde{A})$ for a positive constant $\gamma$, we can (randomly) control the spread of the eigenvalues of the covariance matrix $A \in \R^{d\times d}$ through the parameter $\gamma > 0$.

For the ``concentrated" covariance matrices, we chose $\gamma=0.2$ for $d=2$ and $\gamma=5$ in $d=15$. For the ``spread out" covariance matrices, we chose $\gamma=5$ for $d=2$ and $\gamma=20$ in $d=15$.

\subsubsection{Remaining plots}
In all examples here, $P = \cN(0,I_d)$ and $Q = \cN(0,\Sigma)$, where $\Sigma$ is a randomly generated covariance matrix that has some eigenvalues greater than 1, and some less than 1, which we call ``spread out" (see above). We present examples in $d=2$ and $d=15$, illustrating finite-sample effects in both low and high dimensions. We perform the MSE estimates using Monte-Carlo integration with $5\cdot 10^5$ samples. For each figure, we perform 15 random trials of map estimation where we learn the map with the value $N$ in the plots, and vary $\eps$.
\begin{figure}[h]
    \centering
    \begin{subfigure}[h]{0.45\textwidth}
        \includegraphics[width=\textwidth]{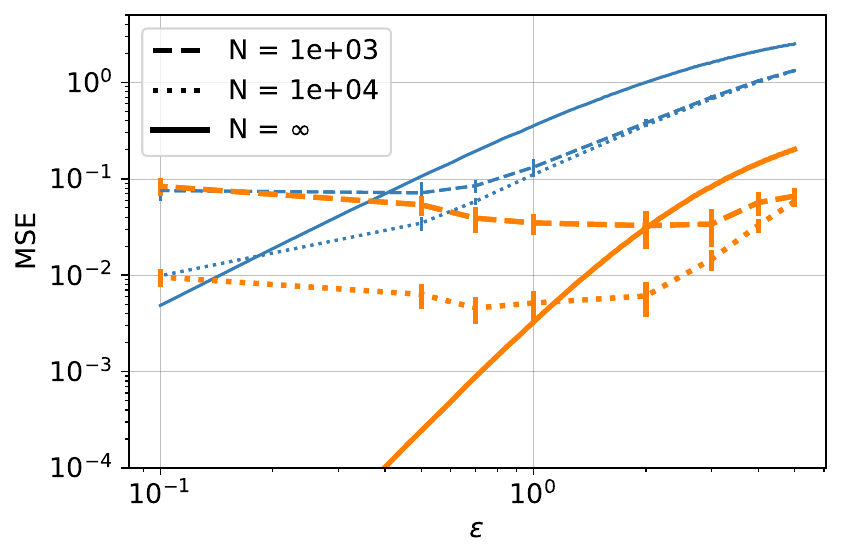}
        \caption{ $d=2$ with $\Sigma$ spread out }
    \end{subfigure}
    \begin{subfigure}[h]{0.45\textwidth}
        \includegraphics[width=\textwidth]{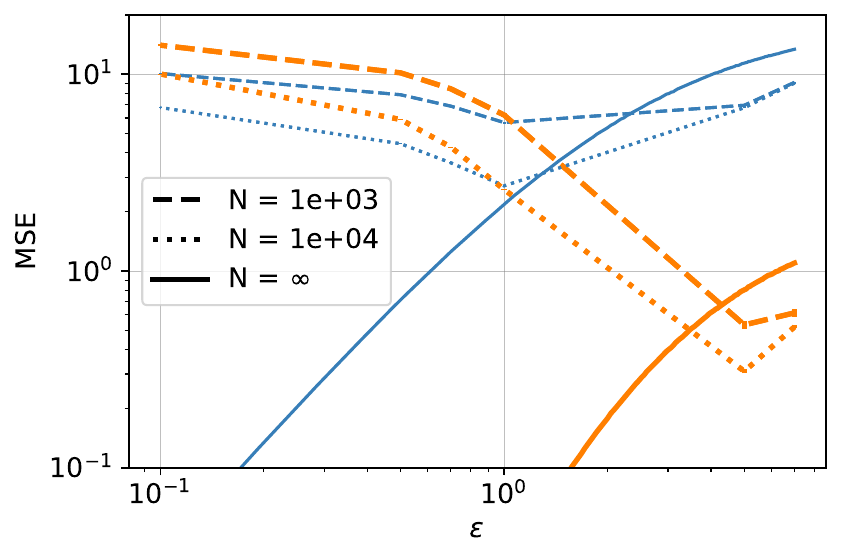}
        \caption{ $d=15$ with $\Sigma$ spread out }
    \end{subfigure}

    \caption{Finite-sample effect for \textcolor{blue_plots}{$\hat{T}_\eps$} vs. \textcolor{orange_plots}{$\hat{T}_\eps^D$} when $\Sigma$ is spread out }
    \label{fig: g2g_examples}
\end{figure}
\end{document}